

\documentclass[11pt]{amsart} 

\usepackage[utf8]{inputenc} 


\usepackage{geometry} 
\geometry{a4paper} 

\usepackage{graphicx} 
\usepackage{color}


\usepackage{booktabs} 
\usepackage{array} 
\usepackage{paralist} 
\usepackage{verbatim} 
\usepackage{subfig} 

\usepackage{amsfonts}
\usepackage{amsmath}
\usepackage{amsthm}
\usepackage{amssymb}
\usepackage{mathtools}
\mathtoolsset{centercolon,showonlyrefs,showmanualtags}

\newcommand{\thetak}{\theta^{\kappa}}


\newcommand{\mel}{\MoveEqLeft}
\usepackage{amsthm}

\newcommand{\D}{\mathcal{D}}

\newtheorem{example*}{Example\textsuperscript{*}}
\newtheorem{proposition*}{Proposition\textsuperscript{*}}

\newtheorem{corollary*}{Corollary\textsuperscript{*}}
\newtheorem{theorem}{Theorem}
\newtheorem{proposition}{Proposition}


\def\Limes#1#2 {\lim\limits_{#1\rightarrow #2}}

\def\eps{\epsilon}

\def\R{\mathbb{R}}

\def\T{\mathbb{T}}
\def\Z{\mathbb{Z}}
\DeclareMathOperator{\loc}{loc}

\def\N{\mathbb{N}}
\DeclareMathOperator{\spt}{spt}
\def\Xint#1{\mathchoice
{\XXint\displaystyle\textstyle{#1}}%
{\XXint\textstyle\scriptstyle{#1}}%
{\XXint\scriptstyle\scriptscriptstyle{#1}}%
{\XXint\scriptscriptstyle\scriptscriptstyle{#1}}%
\!\int}
\def\XXint#1#2#3{{\setbox0=\hbox{$#1{#2#3}{\int}$ }
\vcenter{\hbox{$#2#3$ }}\kern-.59\wd0}}
\def\avint{\Xint-}

\def\intT{\int_{\T^d}}
\def\intR{\int_{\R^d}}

\newtheorem{lemma}{Lemma}
\def\norm#1{\lVert #1 \rVert}
\def\scalar#1#2{\langle #1,#2 \rangle}

\renewcommand{\div}{\operatorname{div}}

\def\dd{\mathrm{d}}
\def\dx{\mathrm{d}x}
\def\dy{\mathrm{d}y}
\def\dt{\mathrm{d}t}
\def\ds{\mathrm{d}s}
\def\dr{\mathrm{d}r}

\newcommand{\grad}{\nabla}
\newcommand{\laplace}{\Delta}

\usepackage[square]{natbib}
\setcitestyle{numbers}

\usepackage{hyperref}

\begin{document}

\title[Propagation of regularity for transport equations]{Propagation of regularity for transport equations. A~Littlewood--Paley approach}
\thanks{This work is funded by the Deutsche Forschungsgemeinschaft (DFG, German Research Foundation) under Germany's Excellence Strategy EXC 2044 --390685587, Mathematics M\"unster: Dynamics--Geometry--Structure, and in particular under grant number 432402380.
\copyright 2022 by the authors.
}
\date{\today}

\author{David Meyer}
\address{DM: Institut f\"ur Analysis und Numerik, Westf\"alische Wilhelms-Universit\"at M\"unster, M\"unster, Germany}
\email{dmeyer2@uni-muenster.de}

\author{Christian Seis}
\address{CS: Institut f\"ur Analysis und Numerik, Westf\"alische Wilhelms-Universit\"at M\"unster, M\"unster, Germany}
\email{seis@wwu.de}

\begin{abstract}
It is known that   linear advection equations with Sobolev   velocity fields  have very poor regularity properties: Solutions  propagate only  derivatives of logarithmic order, which can be measured in terms of suitable Gagliardo seminorms. We propose a new approach to the study of regularity that is based on Littlewood--Paley theory, thus measuring regularity in terms of Besov norms. We recover the  results that are available in the literature and extend these optimally to the diffusive setting. As a consequence, we derive sharp bounds on  rates of convergence in the zero-diffusivity limit.
\end{abstract}
\maketitle

\section{Introduction}
Given a velocity field $u=u(t,x)\in\R^d$ on the $d$-dimensional flat torus $\T^d=[0,2\pi)^d$, we consider solutions $\theta=\theta(t,x)\in\R$ to the advection equation
\begin{equation}
\label{eq25} \partial_t \theta + u\cdot \grad \theta = 0
\end{equation}
on $\T^d$.
By considering this linear model it is supposed that the velocity field has no or negligible  feedback on the transported quantities, and $\theta$ is accordingly commonly referred to as a \emph{passive scalar} or simply \emph{tracer}. In spite of its mathematical simplicity, advection equations are of fundamental importance in a variety of models in physics. Motivated by applications in fluid dynamics, we suppose that $u$ is divergence-free, 
\begin{equation}\label{21}
\div u=0,
\end{equation}
which is relevant for incompressible fluids, and we assume that the velocity has low regularity properties, $u\in L^1((0,\infty);W^{1,p}(\T^d))$ for some $p\in(1,\infty)$. 
In this case, the advection equation \eqref{eq25}   can be rewritten in conservative form, which implies that the evolution preserves the mean of the  solutions, and this can be set to zero without restriction: If we denote by $\theta_0$ the initial datum of  $\theta$, it holds that
\begin{equation}
\label{22}
\intT \theta(t,x)\, \dx = \intT \theta_0(x)\, \dx =0,
\end{equation}
for any $t>0$. In fact, the theory of renormalized solutions by DiPerna and Lions \cite{DiPernaLions89} guarantees that the evolution \eqref{eq25} leaves any $L^q$-norm invariant, 
\begin{equation}\label{23}
\|\theta(t)\|_{L^q} = \|\theta_0\|_{L^q},
\end{equation}
for any $q\in [1,\infty]$. We shall assume that our solutions are bounded, $q=\infty$, in the sequel.

Our main concern in the present work is the maximal  regularity that is propagated by  solutions to \eqref{eq25}. This issue was addressed earlier: While in the Lipschitz case ($p=\infty$), it is easily seen that Sobolev regularity is propagated by solutions,
\[
\|\grad \theta(t)\|_{L^q} \le \exp\left(\int_0^t \|\grad u\|_{L^{\infty}}\, \dt\right) \|\grad\theta_0\|_{L^q},
\]
for any $q\in[1,\infty]$, there is a dramatic loss of regularity in the case of Sobolev vector fields ($p<\infty$). Indeed, it is possible for smooth initial data to lose all (even fractional) Sobolev regularity instantaneously at $t=0+$, as proved recently by Alberti, Crippa and Mazzucato \cite{AlbertiCrippaMazzucato19} (see also the construction by Crippa, Mazzucato, Iyer and Elgindi in \cite{crippa2021growth}). Earlier results in this direction were obtained by Jabin \cite{Jabin16}. Instead, during the evolution \eqref{eq25}, only a logarithm of a derivative can be preserved. On the level of the Lagrangian flow for the vector field $u$, such an observation is already implicitly contained in the work of Crippa and De Lellis \cite{CrippaDeLellis08}, who establish sharp regularity and stability estimates for the associated ODE. Their findings were recently translated to the PDE setting \eqref{eq25} by Bru\`e and Nguyen \cite{BrueNguyen21}, who measure the regularity propagated by solutions in terms of ($\R^d$-versions of) the Gagliardo seminorms
\[
\|\theta\|_{H^{\log, a}}: = \left(
\intT\intT \frac{|\theta(x) - \theta(y)|^2}{|x-y|^d} \log^{2a-1}\left(1+\frac1{|x-y|}\right)\, \dx\dy\right)^{1/2}.
\]
Similar seminorms were considered earlier by Ben Belgacem and Jabin \cite{BenBelgacemJabin13,BenBelgacemJabin19}, Bresch and Jabin \cite{BreschJabin18} and Leger \cite{Leger18}, in order to study the regularity properties of transport equations.  Function spaces of logarithmic smoothness are of independent interest in functional analysis and investigated, for instance, by Cobos, Dom\'inguez, Triebel and Tikhonov \cite{CobosDominguez15,Dominguez17,DominguezTikhonov18,cobos2016characterizations}.

The optimal results by Bru\`e and Nguyen \cite{BrueNguyen21} show that the $H^{\log, a}$ regularity is preserved under the linear evolution \eqref{eq25} if $
a = p/2$. 
In the present paper, we revisit this property by analyzing the propagation of regularity in terms of \emph{equivalent} Besov norms. More specifically, we consider the seminorms
\begin{equation}
\label{29}
\norm{\theta}_{B^{\log, a}}:=\left(\sum_{k=1}^\infty k^{2a} \norm{\theta*\phi_k}_{L^2}^2\right)^{1/2},
\end{equation}
where $\{\phi_k\}_{k\in\N}$ is a family of $L^1$-dilations which project onto dyadic Fourier blocks concentrated around  wave numbers of scale $ 2^k$, so that $\Delta_k =\Delta_k^{\phi} = \phi_k\ast$ is the standard Littlewood--Paley operator, and 
\[
\theta = \sum_{k=1}^{\infty} \theta\ast\phi_k,
\]
is the Littlewood--Paley decomposition of the (mean-free) function $\theta$. This norm is equivalent to the above Gagliardo seminorm. We will provide a detailed definition of the family  $\{\phi_k\}_{k\in\N}$ and discuss its properties in Section \ref{S2} below. The space of functions for which the above norm is finite will be denoted by $B^{\log,a}(\T^d)$. 

In our main result, we reproduce the bound in \cite{BrueNguyen21} by using Littlewood--Paley theory.

\begin{theorem}\label{T1}
Let $p\in(1,\infty)$ be given and let $u\in L^1((0,\infty);W^{1,p}(\T^d))$ be a divergence-free  vector field. Let $\theta\in L^{\infty}((0,\infty)\times \T^d)$ be a mean-free solution to the advection equation \eqref{eq25}. If $\theta_0\in B^{\log,a}(\T^d)$ for some
\[
\frac12\le  a < \frac{p}2,
\]
then $\theta\in L^{\infty}_{\loc}((0,\infty); B^{\log,a}(\T^d))$, and there exists a constant $C$, depending on $d$, $p$ and $a$, such that \begin{align}
\norm{\theta(t)}_{B^{\log, a}}\leq C\left(\int_0^t\norm{\nabla u}_{L^p}\dt\right)^{a}\norm{\theta_0}_{L^\infty}+C\norm{\theta_0}_{B^{\log, a}},
\end{align}
for any $t>0$. 
\end{theorem}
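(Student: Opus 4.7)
The plan is to apply the Littlewood--Paley projectors $\Delta_k=\phi_k\ast$ to the advection equation, track the evolution of each dyadic block $\|\Delta_k\theta(t)\|_{L^2}^2$, and close the $B^{\log,a}$ estimate by means of a Bernoulli-type ODE.

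\smallskip\noindent
\textbf{Step 1 (localised energy identity).} Applying $\Delta_k$ to \eqref{eq25} yields $\partial_t \Delta_k\theta + u\cdot\nabla \Delta_k\theta = -F_k$ with $F_k := [\Delta_k, u\cdot\nabla]\theta$. Because $\div u = 0$, the transport term drops out of the $L^2$-energy identity, leaving
\[
\tfrac12\tfrac{d}{dt}\|\Delta_k\theta\|_{L^2}^2 = -\int \Delta_k\theta\cdot F_k\,\dx.
\]

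\smallskip\noindent
\textbf{Step 2 (commutator representation and pointwise bound).} Integrating by parts, once using $\int\nabla\phi_k = 0$ and once using $\div u = 0$, yields the clean representation
\[
F_k(x) = \int [\theta(y)-\theta(x)]\,\nabla\phi_k(x-y)\cdot (u(x)-u(y))\,\dy.
\]
Combined with the almost-everywhere Haj\l asz/Morrey estimate $|u(x)-u(y)|\lesssim |x-y|(M|\nabla u|(x)+M|\nabla u|(y))$ (valid for $u\in W^{1,p}$, $p>1$) and the scale-$2^{-k}$ localisation of $\nabla\phi_k$, this produces a pointwise bound
\[
|F_k(x)|\lesssim \|\theta_0\|_{L^\infty}\bigl(M|\nabla u|(x)+M^2|\nabla u|(x)\bigr),
\]
which is independent of $k$. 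In particular $\|F_k\|_{L^p}\lesssim \|\theta_0\|_{L^\infty}\|\nabla u\|_{L^p}$ by $L^p$-boundedness of $M$.

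\smallskip\noindent
\textbf{Step 3 (Bernoulli differential inequality).} Pairing with $\Delta_k\theta$ via H\"older and interpolating $\|\Delta_k\theta\|_{L^{p'}}\lesssim \|\Delta_k\theta\|_{L^2}^{2/p'}\|\theta_0\|_{L^\infty}^{1-2/p'}$ (for $p\le 2$; a dual estimate handles $p>2$) gives a Bernoulli-type ODE on each dyadic piece:
\[
\tfrac{d}{dt}\|\Delta_k\theta\|_{L^2}^2 \lesssim \|\nabla u\|_{L^p}\,\|\theta_0\|_{L^\infty}^{2/p}\,\|\Delta_k\theta\|_{L^2}^{2/p'}.
\]
Multiplying by $k^{2a}$, summing in $k$, and applying a discrete H\"older step that bounds $\sum_k k^{2a}\|\Delta_k\theta\|_{L^2}^{2/p'}$ by $C\|\theta_0\|_{L^\infty}^{1/a-2/p}\|\theta\|_{B^{\log,a}}^{2-1/a}$, I arrive at the closed inequality
\[
\tfrac{d}{dt}\|\theta(t)\|_{B^{\log,a}}^2 \lesssim \|\nabla u\|_{L^p}\,\|\theta_0\|_{L^\infty}^{1/a}\,\|\theta(t)\|_{B^{\log,a}}^{2-1/a}.
\]

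\smallskip\noindent
\textbf{Step 4 (integration).} Setting $Z(t) := \|\theta(t)\|_{B^{\log,a}}^{1/a}$ linearises the inequality to $Z'\lesssim \|\theta_0\|_{L^\infty}^{1/a}\|\nabla u\|_{L^p}$. Integrating and raising to the $a$-th power delivers
\[
\|\theta(t)\|_{B^{\log,a}}\lesssim \|\theta_0\|_{B^{\log,a}}+\|\theta_0\|_{L^\infty}\,A(t)^a,\qquad A(t):=\int_0^t\|\nabla u\|_{L^p}\,\ds,
\]
which is the asserted bound.

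\smallskip\noindent
\textbf{Main obstacle.} The commutator bound of Step~2 is uniform in $k$ and provides no frequency gain per block; consequently a naive summation diverges. The crux lies in Step~3, where the discrete H\"older manipulation converts a divergent $\ell^{2/p'}$-sum of dyadic pieces into the sub-critical $\|\theta\|_{B^{\log,a}}^{2-1/a}$. The assumption $a<p/2$ enters precisely at this point, guaranteeing both summability of the relevant dyadic series and that the Bernoulli exponent $2-1/a$ remains strictly below~$2$, so that the resulting ODE integrates to the sub-additive $A(t)^a$-scaling rather than to exponential growth.
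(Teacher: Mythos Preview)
Your proposal has a genuine gap in Step~3. The ``discrete H\"older step'' you invoke---bounding
\[
\sum_{k\ge 1} k^{2a}\|\Delta_k\theta\|_{L^2}^{2/p'} \lesssim \|\theta_0\|_{L^\infty}^{1/a-2/p}\,\|\theta\|_{B^{\log,a}}^{2-1/a}
\]
---is simply false. To see this, take a lacunary sum $\theta=\sum_{k=1}^{N}\epsilon_k e^{i2^k x_1}$ with random signs, so that $\|\Delta_k\theta\|_{L^2}\sim 1$ for $k\le N$, $\|\theta\|_{L^\infty}\sim (N\log N)^{1/2}$, and $\|\theta\|_{B^{\log,a}}\sim N^{a+1/2}$. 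The left-hand side above is $\sim N^{2a+1}$, whereas your right-hand side is $\sim N^{2a-1/p}$ up to logarithms. The inequality fails by a full power of $N$. You correctly identify the obstacle---the commutator bound from Step~2 is uniform in $k$ and carries no frequency gain---but the proposed remedy does not exist; no amount of post-hoc H\"older on the individual block energies can recover the missing $k$-decay.

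The paper's proof avoids this by never passing through a uniform-in-$k$ pointwise commutator bound. Instead of the maximal-function estimate $|u(x)-u(y)|\lesssim |x-y|(M|\nabla u|(x)+M|\nabla u|(y))$, it performs a paraproduct-type decomposition of the commutator: the velocity is split into a high-frequency part $u_{k+3}^{\ge}$ and a low-frequency part $u_{k+2}^{\le}$, producing three terms $\mathrm{I}$, $\mathrm{II}$, $\mathrm{III}$. Each is estimated not block by block but globally, using H\"older in $x$ together with the Littlewood--Paley square-function characterisation of $\|\nabla u\|_{L^p}$ (this is where $1<p<\infty$ enters) and the interpolation inequalities of Lemma~\ref{L1}, which control quantities such as $\|\sup_k k^{\alpha_1}|\theta_k^{\ge}|\|_{L^q}$ and $\|(\sum_k k^{2\alpha_2}|\theta\ast\eta_k|^2)^{1/2}\|_{L^s}$ in terms of $\|\theta\|_{L^\infty}$ and $\|\theta\|_{B^{\log,a}}$. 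The restriction $a<p/2$ arises precisely from the admissible exponents in Lemma~\ref{L1}c). This yields the closed inequality $\frac{d}{dt}\|\theta\|_{\mathbf{B}^{\log,a}}^2\lesssim \|\nabla u\|_{L^p}\|\theta\|_{L^\infty}^{1/a}\|\theta\|_{\mathbf{B}^{\log,a}}^{2-1/a}$ directly, without any summation of uniform per-block bounds.
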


The reader who is familiar with DiPerna and Lions's well-posedness theory \cite{DiPernaLions89} for the advection equation \eqref{eq25} will notice that using Littlewood--Paley theory is quite natural in the study of regularity properties. Indeed, roughly speaking, the energy balance in \eqref{23}, from which uniqueness for the linear problem follows immediately, is derived by a mollification procedure: If $\{\phi^{\eps}\}_{\eps>0}$ is a family of standard mollifiers, the smooth function $\theta\ast \phi^{\eps}$ solves the equation
\[
\partial_t (\theta\ast\phi^{\eps}) + u\cdot \grad(\theta\ast\phi^{\eps}) = \div[ u, \phi^{\eps}*]\theta,
\]
where $[ u, \phi^{\eps}*]$ denotes the commutator of the operations ``multiply by $u$'' and ``convolute with $\phi^{\eps}$''. The key lemma in \cite{DiPernaLions89} provides the convergence to zero of the commutator in the limit $\eps\to0$. In view of the fact that a mollification in physical space $\theta\ast \phi^{\eps}$ corresponds to a cut-off in Fourier space $\theta\ast \phi_k$, our approach to regularity in Theorem~\ref{T1} is a \emph{sharp quantification of DiPerna and Lions's commutator lemma}. A crude estimate on the Littlewood--Paley commutator was used earlier by one of the authors to estimate the $L^1$-based energy spectrum in the diffusive setting \cite{Seis20}.

The approach to regularity for the transport equation that we propose in the present paper is   \emph{new} and at the same time  elementary. As we will see later, it also provides new tools for the derivation of   mixing bounds. Earlier bounds on regularity (and mixing) \cite{CrippaDeLellis08,BenBelgacemJabin13,Seis13a,BenBelgacemJabin19,BreschJabin18,BrueNguyen21} rely on estimates for the Hardy--Littlewood maximal function or sophisticated harmonic analysis tools (cf.~\cite{SeegerSmartStreet19}) in order to control difference quotients of the velocity field. Instead, our estimates   against the velocity gradient are based on   Littlewood--Paley theory. Actually, while earlier limitations to the case $p>1$ came from the fact that  strong type estimates on the maximal function operator cease to hold in $L^1$, our restriction to $p>1$ (and also to $a<p/2$) is related to  the failure of the Littlewood--Paley theorem (see \eqref{24}) in the endpoint cases $L^1$ and $L^{\infty}$.  Notice that by interpolation between the estimates for $a=1/2$ and $a=0$ given in \eqref{23}, our result can be extended to any $a\in[0,p/2)$ under suitable conditions on the initial datum.

An advantage of our method is that the above regularity theorem easily extends to the diffusive setting, in contrast to \cite{BrueNguyen21} as claimed in \cite{BrueNguyen21a}. Given a positive diffusivity constant $\kappa$, we denote by    $\thetak = \thetak(t,x)\in\R$  the solution to the advection-diffusion equation
\begin{equation}
\label{1}
\partial_t \thetak + u\cdot \grad \thetak = \kappa \laplace \thetak
\end{equation}
on $\T^d$, and we suppose that $\theta_0$ is   its mean-free initial datum, so that \eqref{22} carries over to $\thetak$. Thanks to diffusive dissipation, the energy identity in \eqref{23} becomes
\begin{equation}
\label{30}
\|\thetak(t)\|_{L^q} \le \|\theta_0\|_{L^q}.
\end{equation}
We will sometimes also make use of the balance law
\begin{equation}
\label{36}
\|\thetak(t)\|_{L^2}^2 + 2\kappa \int_0^t \|\grad\thetak\|_{L^2}^2\, \dt = \|\theta_0\|_{L^2}^2.
\end{equation}

The diffusive version of Theorem \ref{T1} is the following theorem.

\begin{theorem}\label{T2}
Let $p\in(1,\infty)$ be given and let $u\in L^1((0,\infty);W^{1,p}(\T^d))$ be a divergence-free vector field. Let $\theta\in L^{\infty}((0,\infty)\times \T^d)$ be a mean-free solution to the advection-diffusion equation \eqref{1}. If $\theta_0\in B^{\log,a}(\T^d)$ for some
\[
\frac12 \le a <\frac{p}2,
\]
then $\thetak\in L^{\infty}_{\loc}((0,\infty); B^{\log,a}(\T^d))$ with $\grad \thetak \in L^2_{\loc}((0,\infty);B^{\log, a}(\T^d))$, and there exists a constant $C$, depending on $d$, $p$ and $a$, such that \begin{align}
\norm{\thetak(t)}_{B^{\log, a}} +  \left(\kappa \int_0^t \|\grad \thetak\|_{B^{\log,a   }}^2\, \dt\right)^{1/2} \leq C\left(\int_0^t\norm{\nabla u}_{L^p}\dt\right)^{a}\norm{\theta_0}_{L^\infty}+ C\norm{\theta_0}_{B^{\log, a}},
\end{align}
for any $t>0$.  
\end{theorem}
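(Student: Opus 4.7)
The plan is to import the Littlewood--Paley energy argument that underpins Theorem~\ref{T1} and to exploit the fact that diffusion contributes a manifestly nonnegative extra term on the left-hand side of the dyadic energy balance. Writing $\thetak_k := \phi_k \ast \thetak$, I apply the Littlewood--Paley operator $\phi_k\ast$ to equation \eqref{1} and use $\div u = 0$ to rewrite the transport in conservative form, which yields
\[
\partial_t \thetak_k + u\cdot \grad \thetak_k - \kappa \laplace \thetak_k = -\div [\phi_k\ast, u]\thetak,
\]
where $[\phi_k\ast, u]\thetak := \phi_k\ast(u\thetak) - u(\phi_k\ast \thetak)$ denotes the standard commutator.

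Next, I test against $\thetak_k$, integrate over $\T^d$, and integrate by parts. The transport term drops thanks to the incompressibility of $u$, while the Laplacian produces the diffusive dissipation, so that the scale-$k$ balance reads
\[
\frac12 \frac{\dd}{\dt}\|\thetak_k\|_{L^2}^2 + \kappa \|\grad\thetak_k\|_{L^2}^2 = \intT \grad \thetak_k \cdot [\phi_k\ast, u]\thetak \, \dx.
\]
The key observation is that the right-hand side is \emph{formally identical} to the quantity driving the proof of Theorem~\ref{T1}: the commutator, the tracer $\thetak$ (of which one shall only use $\|\thetak\|_{L^\infty}\le \|\theta_0\|_{L^\infty}$, a bound preserved under diffusion by the maximum principle, i.e.\ by \eqref{30} at $q=\infty$), and the factor $\thetak_k$ enter exactly as in the purely advective case. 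Multiplying by $k^{2a}$, summing over $k\in\N$, and integrating in time produces
\[
\|\thetak(t)\|_{B^{\log,a}}^2 + 2\kappa \int_0^t \|\grad \thetak(s)\|_{B^{\log,a}}^2 \, \ds \leq \|\theta_0\|_{B^{\log,a}}^2 + 2\int_0^t \sum_{k\ge 1} k^{2a} \intT \grad \thetak_k \cdot [\phi_k\ast, u]\thetak \, \dx\,\ds,
\]
and the final term is controlled by the very weighted commutator estimate developed for Theorem~\ref{T1}, yielding a bound of the form $C\left(\int_0^t \|\grad u\|_{L^p}\,\ds\right)^{2a}\|\theta_0\|_{L^\infty}^2$. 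Square-rooting with $\sqrt{A+B}\le \sqrt A + \sqrt B$ on the right and $(a+b)^2\le 2(a^2+b^2)$ on the left delivers the asserted estimate.

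The main obstacle is conceptual rather than technical: one must verify that the weighted commutator bound behind Theorem~\ref{T1} transfers verbatim to the diffusive setting. This is automatic because the commutator identity does not depend on $\kappa$, the only input from the solution is an $L^\infty$ estimate that survives diffusion, and the diffusive contribution $\kappa\|\grad\thetak_k\|_{L^2}^2$ is deposited as a free extra term on the left without interfering with the right-hand side analysis. Crucially, no Young-type absorption of $\int \grad \thetak_k \cdot [\phi_k\ast, u]\thetak\,\dx$ into the dissipation is needed, which would otherwise introduce an unwanted $\kappa^{-1}$ factor and spoil the vanishing-diffusivity limit that the paper aims to quantify.
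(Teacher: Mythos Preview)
Your argument has a genuine gap at the closure step. The weighted commutator bound established in the proof of Theorem~\ref{T1} (equation~\eqref{eq 4} in the paper) reads
\[
\Big|\sum_{k}\,(\text{weight})\cdot(\text{commutator})\Big|\ \lesssim\ \|\nabla u\|_{L^p}\,\|\thetak\|_{L^\infty}^{1/a}\,\|\thetak\|_{\bold B^{\log,a}}^{(2a-1)/a},
\]
and the right-hand side \emph{contains the unknown} $\|\thetak(s)\|_{B^{\log,a}}$. You cannot integrate in time first and then declare the result to be $C\big(\int_0^t\|\nabla u\|_{L^p}\big)^{2a}\|\theta_0\|_{L^\infty}^2$: that is circular. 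The paper closes by recasting the energy identity as the nonlinear differential inequality~\eqref{31},
\[
a\,\frac{\dd}{\dt}\|\thetak\|_{\bold B^{\log,a}}^{1/a}\;+\;\kappa\,\|\thetak\|_{\bold B^{\log,a}}^{1/a-2}\|\nabla\thetak\|_{\bold B^{\log,a}}^{2}\ \lesssim\ \|\nabla u\|_{L^p}\,\|\theta_0\|_{L^\infty}^{1/a},
\]
whose right-hand side is now free of the $B^{\log,a}$ norm and integrates directly to give the bound on $\|\thetak(t)\|_{B^{\log,a}}$. The dissipation estimate then requires a \emph{second} pass: since $1/a-2\le 0$, the already-established bound on $\sup_{s\le t}\|\thetak(s)\|_{\bold B^{\log,a}}$ is fed back to control the prefactor $\|\thetak\|_{\bold B^{\log,a}}^{1/a-2}$ from below, after which integrating~\eqref{31} yields the control on $\kappa\int_0^t\|\nabla\thetak\|_{B^{\log,a}}^2\,\ds$. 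Your integrated inequality with the commutator still on the right does not deliver either step.

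A secondary issue: you run the energy identity on the bandpass blocks $\thetak_k=\phi_k\ast\thetak$ with weights $k^{2a}$, whereas the paper works with the high-pass pieces $\thetak_k^{\geq}$ and weights $k^{2a-1}$, i.e.\ with the equivalent norm $\bold B^{\log,a}$ of~\eqref{32}. The paper states explicitly that this reformulation is what makes the commutator analysis tractable, and the decomposition into the three terms $\mathrm{I}$, $\mathrm{II}$, $\mathrm{III}$ together with the interpolation inputs from Lemma~\ref{L1} are tailored to the high-pass formulation. Invoking ``the very weighted commutator estimate developed for Theorem~\ref{T1}'' on your bandpass identity is therefore not automatic; at minimum you would have to redo the frequency-localisation bookkeeping in that setting.
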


A gain of regularity due to diffusion is not surprising, and the statement is certainly not optimal when asking for the maximal regularity that is propagated by \eqref{1}. The strength of Theorem \ref{T2} relies on the \emph{uniformity in $\kappa$} of the regularity estimate. Indeed, based on the new estimate, we are able to derive optimal bounds on  rates of convergence in the zero-diffusivity limit.

\begin{theorem}
\label{T4}
Under the hypotheses of Theorems \ref{T1} and \ref{T2}, there exists a constant $C$ dependent on $d$, $p$ and $a$  such that
\begin{equation}\label{26}
\begin{aligned}
\mel \|\theta(t) - \thetak(t)\|_{L^2} + \left(\kappa \int_0^t \|\grad\thetak\|_{L^2}^2\, \dt\right)^{1/2}\\
& \le \frac{C}{\log^a\left(2+\frac1{\kappa t}\right)}\left( \left(1+ \left(\int_0^t \|\grad u\|_{L^p}\, \dt\right)^p\right) \|\theta_0\|_{L^{\infty}} + \|\theta_0\|_{B^{\log, a}}\right),
 \end{aligned}
\end{equation}
for any $t,\kappa>0$.
\end{theorem}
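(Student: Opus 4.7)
Set $w := \theta - \thetak$. The plan is to derive an energy identity for $\|w(t)\|_{L^2}^2 + 2\kappa\int_0^t\|\grad\thetak\|_{L^2}^2\,\dt$ and then split the driving term on the right-hand side via a Littlewood--Paley cut-off at some scale $N$. High frequencies will be controlled by the uniform Besov regularity from Theorems~\ref{T1} and~\ref{T2}, which yield $\|P_{>N}\theta\|_{L^2}, \|P_{>N}\thetak\|_{L^2} \lesssim N^{-a}M$ with $M := (\int_0^t\|\grad u\|_{L^p}\,\dt)^a\|\theta_0\|_{L^{\infty}} + \|\theta_0\|_{B^{\log,a}}$. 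Low frequencies will contribute an error of size $\kappa t\,2^{2N}\|\theta_0\|_{L^{\infty}}^2$ via Bernstein's inequality; optimizing $N\approx\tfrac12\log_2(2+1/(\kappa t))$ then produces the claimed logarithmic rate.

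Expanding $\|w\|_{L^2}^2 = \|\theta\|_{L^2}^2 + \|\thetak\|_{L^2}^2 - 2\langle\theta,\thetak\rangle$, substituting~\eqref{23} and~\eqref{36}, and computing $\tfrac{d}{dt}\langle\theta,\thetak\rangle = \kappa\langle\theta,\laplace\thetak\rangle$ (the transport contributions cancel by $\div u = 0$), one arrives at
\begin{equation*}
\|w(t)\|_{L^2}^2 + 2\kappa\int_0^t\|\grad\thetak\|_{L^2}^2\,\dt = -2\kappa\int_0^t\langle\theta,\laplace\thetak\rangle\,\dt.
\end{equation*}
Splitting $\theta = P_{\le N}\theta + P_{>N}\theta$ by dyadic projectors, the low-frequency part is bounded by integration by parts, Bernstein's inequality $\|\grad P_{\le N}\theta\|_{L^2}\lesssim 2^N\|\theta_0\|_{L^{\infty}}$, and Young's inequality, giving
\begin{equation*}
-2\kappa\int_0^t\langle P_{\le N}\theta,\laplace\thetak\rangle\,\dt \le \kappa\int_0^t\|\grad\thetak\|_{L^2}^2\,\dt + C\kappa t\,2^{2N}\|\theta_0\|_{L^{\infty}}^2,
\end{equation*}
with the first summand absorbed into the left-hand side.

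The high-frequency part is the technical core. I would substitute $\kappa\laplace\thetak = \partial_t\thetak + u\cdot\grad\thetak$ from~\eqref{1}, integrate by parts in time, and use $\partial_t\theta = -u\cdot\grad\theta$ together with $\div u = 0$ to obtain
\begin{equation*}
-2\kappa\int_0^t\langle P_{>N}\theta,\laplace\thetak\rangle\,\dt = -2\langle P_{>N}\theta(t),\thetak(t)\rangle + 2\|P_{>N}\theta_0\|_{L^2}^2 - 2\int_0^t\langle[P_{>N},u\cdot\grad]\theta,\thetak\rangle\,\dt.
\end{equation*}
By almost-orthogonality of the dyadic blocks, the two boundary terms only pair high-frequency modes of $\theta$ and $\thetak$, and are therefore bounded by $CN^{-2a}M^2$. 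The commutator term is the main obstacle: I expect to handle it by rerunning the dyadic commutator analysis from the proof of Theorem~\ref{T1} restricted to the tail $k>N$, with weighted Cauchy--Schwarz (weight $k^{2a}$) producing the same $N^{-2a}M^2$ gain. Combining, one arrives at $\|w(t)\|_{L^2}^2 + \kappa\int_0^t\|\grad\thetak\|_{L^2}^2\,\dt \le C\kappa t\,2^{2N}\|\theta_0\|_{L^{\infty}}^2 + CN^{-2a}M^2$; optimizing $N$ as above, taking square roots, and using $(\int_0^t\|\grad u\|_{L^p}\,\dt)^a \le 1 + (\int_0^t\|\grad u\|_{L^p}\,\dt)^p$ (valid since $a < p/2 < p$) then yields~\eqref{26}.
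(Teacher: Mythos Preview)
Your energy identity and the low-frequency estimate are correct, but the commutator step is a genuine gap, not a technicality. The analysis in the proof of Theorem~\ref{T1} controls the \emph{weighted sum}
\[
\sum_{k\ge 1} k^{2a-1}\intT \theta_k^{\ge}\,[\psi_{k-1}\ast,u\cdot]\grad\theta\,\dx,
\]
and this relies on two structural features you do not have: (i) the sum over $k$ is what generates the Besov norms on the right-hand side via Lemma~\ref{L1}, and (ii) the Fourier support of the test function $\theta_k^{\ge}$ is what allows the splitting into $\mathrm I$, $\mathrm{II}$, $\mathrm{III}$ and the replacement of $\theta$ by $\theta_{k+4}^{\le}$ in $\mathrm{III}$. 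In your term $\int_0^t\langle[P_{>N},u\cdot\grad]\theta,\thetak\rangle\,\ds$ there is a \emph{single} scale $N$ and the test function $\thetak$ carries all frequencies, so there is nothing to attach a weight $k^{2a}$ to. A DiPerna--Lions type bound gives only
\[
\bigl|\langle[\psi_N\ast,u\cdot]\grad\theta,\thetak\rangle\bigr|\lesssim\|\grad u\|_{L^p}\|\theta\|_{L^\infty}\|\thetak\|_{L^{p'}},
\]
with no decay in $N$ at all; extracting $N^{-2a}$ would require both $\theta$ and $\thetak$ to contribute high-frequency smallness simultaneously, and neither the commutator's Fourier support nor the bilinear structure forces that. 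The paper itself remarks that the direct argument of Bru\`e--Nguyen reaches only $a=\tfrac{p-2}{2}$ for $p>2$, which is exactly the kind of loss one expects from an approach of your type.

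The paper's proof is organised very differently. The dissipation bound~\eqref{26a} is established on its own by combining the uniform regularity of Theorem~\ref{T2} with the interpolation Lemma~\ref{L3} and optimising in the cut-off $\ell$. The strong convergence rate~\eqref{26b} is then obtained by passing through a \emph{weak} distance: Proposition~\ref{L6} bounds the logarithmic Kantorovich--Rubinstein distance $\D_\delta(\theta,\thetak)$ in terms of the dissipation just controlled, Lemma~\ref{L5} interpolates $\|\theta-\thetak\|_{L^1}$ between $\D_\delta$ and the Besov norm, and a short bootstrap (feeding the $L^1$ bound back into the distance estimate, then absorbing) closes the argument. The optimal-transport layer is precisely what replaces your commutator step and is where the sharp exponent $a$ comes from.
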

Our result was conjectured by Bru\`e and Nguyen in \cite{BrueNguyen21a}, who, in the range $p>2$, on the one hand establish \eqref{26} for $a =\frac{p-2}2$   and on the other hand prove that \eqref{26} fails for $a>\frac{p(p-1)}{2(p-2)}$. A rate of $\log^{\frac{1}{2}}$ was established independently in \cite{bonicatto2021advection}.  Earlier disccusions on estimates of the vanishing rate of the dissipation can be found in \cite{drivas2019anomalous}. 
Regarding the term that multiplies the logarithm on the right-hand side, we notice that we could produce the same quantities that appear in the estimates of Theorems \ref{T1} and \ref{T2} if we restrict to sufficiently small $\kappa $.

Our bound only describes the short time behavior. Long time estimates as for instance formulated in Conjecture 1.7 of \cite{drivas2019anomalous} remain an open question.

We finally remark that there is an intimate relation between propagation of regularity and estimates on mixing rates in fluid flows.  Mixing in the purely advective context \eqref{eq25} refers to the flow-induced homogenization process of the advected quantity $\theta$ in the sense that 
\[
  \theta(t)\to 0\quad\mbox{weakly as }t\to \infty.
\]
Mixing is often measured in terms of negative Sobolev norms \cite{MathewMezicPetzold05,Thiffeault12} and it can be proved that for velocity fields satisfying $\|\grad u\|_{L^p} \le 1$ for some $p\in(1,\infty]$, mixing cannot proceed faster than exponentially in time \cite{CrippaDeLellis08,LunasinLinNovikovMazzucatoDoering12,Seis13a,IyerKiselevXu14,Leger18}. Exponential lower bounds on mixing can be obtained immediately from regularity estimates as in Theorem \ref{T1} and a duality principle \cite{Leger18}. Indeed, a short computation akin to the proof of Proposition 1 in \cite{Leger18} reveals, for example, that 
\[
\|\theta\|_{L^2} \le C \exp\left(\frac{\|\theta\|_{B^{\log,a}}}{\|\theta\|_{L^2}}\right)^{1/a} \|\theta\|_{\dot H^{-1}},
\]
for some $C>0$, and the bound in Theorem \ref{T1} and the conservation of the $L^2$-norm imply that
\[
\|\theta(t)\|_{\dot H^{-1}} \ge c e^{-\lambda t},
\]
for some $\lambda>0$ and $c>0$. This decay behavior is sharp \cite{YaoZlatos17,AlbertiCrippaMazzucato19a,BBPS22}. In Lemma \ref{L5} below we will establish a similar interpolation-type estimate that involves logarithmic optimal transportation distances to establish the strong convergence estimate in Theorem \ref{T4}.  We emphasize  that the derivation of the mixing bound presented in this  paper constitutes a first approach to fluid mixing that is based on Fourier analysis.

In the diffusive setting \eqref{1}, the filamentation generated by the advective flow creates sharp gradients of $\thetak$, which are quickly dissipated. As a result, the intensity is decreased much faster than in a purely diffusive context --- a phenomenon usually referred to as enhanced dissipation \cite{ConstantinKiselevRyzhikZlatos08,CotiZelatiDelgandioElgindi20}. If 
\[
\|\thetak(t)\|_{L^2} \le e^{-Dt} \|\theta_0\|_{L^2}
\]
for some $D>0$ and any $t\ge 0$, the energy balance implies that
\[
\|\theta_0\|_{L^2}^2 \le 4\kappa \int_0^t \|\grad\thetak\|_{L^2}^2\, \dt ,
\]
for any $t\ge (\log2)/2D$, and the estimate on the dissipation rate in \eqref{26} enforces that $D\le  \log^{-1}1/\kappa$ for some $C>0$, which is optimal \cite{BBPS21}. This conditional  bound on enhanced dissipation was established in \cite{Seis20a} with the help of stability estimates \cite{Seis17,Seis18}. Suboptimal bounds based on  regularity are obtained in \cite{BrueNguyen21a}.

\medskip

\emph{Organization of the paper.} In Section \ref{S2} we give a precise definition of the Besov space of logarithmic smoothness considered in the present paper and fix the underlying Littlewood--Paley decomposition. We furthermore discuss elementary properties of these and provide first auxiliary functional analytic  results. In Section \ref{S3} we prove the main regularity results Theorem \ref{T1} and Theorem \ref{T2}. The final Section \ref{S4} is devoted to the zero-diffusivity estimate in Theorem \ref{T4}.

\medskip

\emph{Notation.} We will often write $f\lesssim g$ for two reals $f$ and $g$ if there exists a constant $C$ depending possibly on $d$, $p$, $a$ and other integrability exponents such that $f\le Cg$.

\section{Littlewood--Paley theory and logarithmic Besov space}\label{S2}

We will define Littlewood--Paley decompositions of periodic functions with the help of standard $L^1$-dilations on the full space. This makes it necessary, at least at this introductory level, to distinguish between Fourier transforms on $\T^d$ and on $\R^d$. We recall the respective definitions to fix conventions.
The Fourier transform of a function $\theta$ on $\T^d$  and its Fourier representation are defined and given by
\[
\widehat \theta(\eta) = \avint_{\T^d} e^{-i\eta\cdot x} \theta(x)\, \dx,\quad \theta(x) = \sum_{\eta\in \Z^d} e^{i\eta\cdot x}\widehat \theta(\eta),
\]
for any wave number $\eta\in \Z^d$ and any $x\in \T^d$. The Fourier transform of a  function $\phi$ on $\R^d$ and its Fourier representation are defined and given by
\[
\widehat \phi(\xi) = \frac1{(2\pi)^{\frac{d}2}} \int_{\R^d} e^{-i\xi\cdot x}\phi(x)\, \dx,\quad \phi(x) =  \frac1{(2\pi)^{\frac{d}2}}\int_{\R^d} e^{i\xi\cdot x}\widehat \phi(\xi)\, \mathrm{d}\xi,
\]
for any frequency $\xi\in \R^d$
and any $x\in \R^d$.

Convolutions will be always understood in $\R^d$, which makes it occasionally necessary to extend functions periodically from $\T^d$ to $\R^d$. The convolution of $\theta$ and $\phi$ as above is thus the function
\[
(\phi\ast\theta)(x) = \int_{\R^d} \phi(x-y)\theta(y)\, \mathrm{d}y = \intT\Big(\sum_{\eta\in\Z^d} \phi(x-z-2\pi\eta)\Big)\theta(z)\, \mathrm{d}z,
\]
which is periodic in $x$, and which we interpret thus as a function on $\T^d$. Its Fourier transform obeys then the  product rule,
\[
\widehat{(\phi\ast \theta)}(\eta) = (2\pi)^{d/2}\widehat \phi(\eta)\widehat \theta(\eta),
\]
for any $\eta\in \Z^d$.

We will now describe a standard way to construct Littlewood--Paley decompositions.  We consider a rotationally symmetric Schwartz function $\phi$ whose Fourier transform $\widehat \phi$ is supported in $B_1(0)$ and satisfies  $\widehat{\phi}(\xi)=1$ for $\xi\in \overline{B_{\frac{1}{2}}(0)}$. This function serves as a generator of a dyadic partition of unity in frequency space in the following way: For any $k\in\Z$, we define $\phi_k=2^{kd}\phi(2^k\cdot)-2^{(k-1)d}\phi(2^{k-1}\cdot)$, which has the property that its Fourier transform is supported on a dyadic annulus, 
\begin{equation}
\label{13}
\spt \widehat \phi_k \subset B_{2^{k}}(0)\setminus \overline{B_{2^{k-2}}(0)},
\end{equation}
and thus, each $\phi_k$ has overlapping frequency support only with its direct neighbors, so that
\begin{equation}
\label{27}
\phi_k = \phi_k\ast \left(\phi_{k-1}+\phi_k+\phi_{k+1}\right).
\end{equation}
Moreover,  the  family $\{\phi_k\}_{k\in \Z }$ is constructed in such a way that it partitions the frequency space
\[
 \sum_{k\in \Z} \widehat \phi_k =1\quad \mbox{on } \R^d\setminus \{0\}.
\]
In particular, if $\theta$ is a mean-zero \eqref{22} function on $\T^d$, it holds that $\widehat{\theta}(0)=0$, and thus 
\[
\theta = \sum_{k\in\N} \theta\ast \phi_k.
\]
This is the \emph{Littlewood--Paley decomposition} of the mean-zero function $\theta$ on $\T^d$. 

An important consequence of this decomposition and of the almost orthogonality property \eqref{27}  of the frequency projections is the Littlewood--Paley theorem,
\begin{equation}
\label{24}
\| \Big(\sum_k (\theta*\phi_k)^2\Big)^{1/2}\|_{L^q} \sim  \|\theta\|_{L^q} ,
\end{equation}
cf.~Theorem 6.1.2 in \cite{Grafakos}. It holds true for  integrability exponent $q\in(1,\infty)$. We will make use only of the upper bound and the $L^q$-control of the Littlewood--Paley square function, which holds true for more general $L^1$-dilations $\tilde \phi_k = 2^{kd}\tilde \phi_0(2^k\cdot)$ if $\tilde \phi_0$ is a mean-zero Schwartz function, 
\begin{equation}
\label{28}
\| \Big(\sum_k (  \theta*\tilde\phi_k)^2\Big)^{1/2}\|_{L^q} \lesssim  \|\theta\|_{L^q} ,
\end{equation}
cf.~Theorem 6.1.2 in \cite{Grafakos}.

 For any $k\in\N$, the part of a function $f$ whose frequencies are concentrated in the $k$-th dyadic annulus, will be denoted by $f_k$, more precisely, $f_k=f*\phi_k$.

We furthermore consider the low pass filters  $\psi_k:=\phi+\sum_{j=1}^k\phi_j=2^{kd}\phi(2^k\cdot)$ and  define with them the high frequency parts $f_k^\geq=f-f*\psi_{k-1} = \sum_{j=k}^{\infty} f_j$, where the second identity holds true provided that $f$ has zero mean, so that $f\ast\phi=0$. Similarly, we define $f_k^{\leq } = f\ast \psi_k$, so that $f  = f_k^\leq + f_{k+1}^\geq$. 

In the following we only consider mean-free $\theta$.

With these preparations at hand, the logarithmic Besov norm in \eqref{29},
\[
\norm{\theta}_{B^{\log, a}}:=\left(\sum_{k=1}^\infty k^{2a} \norm{\theta_k}_{L^2}^2\right)^{1/2},
\]
is well-defined for any $a\in \R$. Notice that since the frequency support of $\phi_k$  concentrates around the scale $2^k$, the prefactor scales logarithmically in the frequency, so that
\[
\sum_{k=1}^\infty k^{2a} \norm{\theta_k}_{L^2}^2 \sim \sum_{\eta\in \Z^d} \log^{2a}(|\eta|+1) |\widehat\theta(\eta)|^2,
\] 
which defines a Sobolev norm for a derivative of logarithmic order.

For accuracy we should note here that the associated function space is in the literature denoted by $B_{2,2}^{0,a}(\T^d)$, cf.~\cite{CobosDominguez15,Dominguez17,DominguezTikhonov18}. We also remark that this agrees with the space $H^{\log,2a}$, used by Bru\`{e} and Nguyen \cite{BrueNguyen21a}, as shown in Theorem 1.4 of \cite{brue2019sobolev}.

For our analysis, instead of working with the $B^{\log,a}$ norm, it is more convenient to consider the equivalent norm
\begin{align}\label{32}
\norm{\theta}_{\bold{B}^{\log a}}:=\left(\sum_{j=1}^\infty j^{2a-1} \|\sum_{k\geq j}^\infty \theta_k\|_{L^2}^2\right)^{1/2}=\left(\sum_{j=1}^\infty j^{2a-1} \|\theta_j^\geq\|_{L^2}^2\right)^{1/2}.
\end{align}
This kind of norm was introduced in \cite{cobos2016characterizations}.

The equivalence is established in the following lemma:

\begin{lemma}\label{lemma 1}
For any $a>0$, it holds that
\[
 \bold{B}^{\log ,a}(\T^d) =  B^{\log, a}(\T^d),
 \]
and the respective norms are equivalent.
 \end{lemma}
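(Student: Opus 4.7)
The plan is to establish the two-sided bound
\begin{equation}\label{eq:key-orth}
\|\theta_j^\geq\|_{L^2}^2 \sim \sum_{k\geq j}\|\theta_k\|_{L^2}^2,
\end{equation}
uniformly in $j$ and $\theta$, and then swap the order of summation and evaluate the arising arithmetic sum in $j$. Given \eqref{eq:key-orth}, Fubini yields
\[
\|\theta\|_{\mathbf{B}^{\log,a}}^2 = \sum_{j\geq 1} j^{2a-1}\|\theta_j^\geq\|_{L^2}^2 \sim \sum_{k\geq 1}\|\theta_k\|_{L^2}^2 \sum_{j=1}^k j^{2a-1} \sim \sum_{k\geq 1} k^{2a}\|\theta_k\|_{L^2}^2 = \|\theta\|_{B^{\log,a}}^2,
\]
where the final equivalence $\sum_{j=1}^k j^{2a-1} \sim k^{2a}$ holds precisely because $a>0$ (compare with the integral $\int_0^k s^{2a-1}\,ds$). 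This step is essentially bookkeeping; the content is in \eqref{eq:key-orth}.

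For the upper bound in \eqref{eq:key-orth}, I would expand $\|\theta_j^\geq\|_{L^2}^2 = \sum_{k,k'\geq j}\langle \theta_k,\theta_{k'}\rangle_{L^2}$ and invoke the almost-orthogonality that follows from \eqref{13}: by Plancherel, $\langle \theta_k,\theta_{k'}\rangle_{L^2}=0$ whenever $|k-k'|\geq 2$. What remains are at most three diagonals, which by Cauchy--Schwarz are controlled by $\sum_{k\geq j}\|\theta_k\|_{L^2}^2$.

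The lower bound in \eqref{eq:key-orth} is the step I expect to be most delicate, since it requires recovering information about individual Littlewood--Paley blocks from their partial sum. The idea is to exploit \eqref{27}: writing $\theta = \theta\ast\psi_{j-1} + \theta_j^\geq$ and noting that $\widehat{\psi_{j-1}}$ is supported in $B_{2^{j-1}}$ while $\widehat{\phi_k}$ is supported in the annulus $B_{2^k}\setminus \overline{B_{2^{k-2}}}$, the supports are disjoint as soon as $k\geq j+1$, so
\[
\theta_k = (\theta_j^\geq)\ast \phi_k\qquad\text{for every } k\geq j+1.
\]
Plancherel on $\mathbb{T}^d$ then gives
\[
\sum_{k\geq j+1}\|\theta_k\|_{L^2}^2 \;\sim\; \sum_{\eta\in\Z^d}\Bigl(\sum_{k\geq j+1}|\widehat{\phi_k}(\eta)|^2\Bigr)|\widehat{\theta_j^\geq}(\eta)|^2 \;\lesssim\; \|\theta_j^\geq\|_{L^2}^2,
\]
the last step because, for each $\eta$, at most two terms in $\sum_k |\widehat{\phi_k}(\eta)|^2$ are nonzero and each is bounded by $1$. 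The remaining block $\|\theta_j\|_{L^2}$ is handled by writing $\theta_j = \theta_j^\geq - \theta_{j+1}^\geq$ and using the triangle inequality, which produces a term of the form $\|\theta_j^\geq\|_{L^2}^2 + \|\theta_{j+1}^\geq\|_{L^2}^2$; this is absorbed into the right-hand side of the final estimate after one relabels $j\mapsto j+1$ in the double sum, at the price of the constants the statement allows.
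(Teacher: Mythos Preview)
Your proposal is correct and follows essentially the same route as the paper: both arguments reduce the norm equivalence to the almost-orthogonality estimate $\|\theta_j^{\geq}\|_{L^2}^2 \sim \sum_{k\ge j}\|\theta_k\|_{L^2}^2$ (up to harmless index shifts), then multiply by $j^{2a-1}$, swap the order of summation, and use $\sum_{j=1}^k j^{2a-1}\sim k^{2a}$. The only cosmetic difference is in the lower bound of \eqref{eq:key-orth}: the paper phrases it via the $L^2$ Littlewood--Paley characterization applied to $\theta_{j-1}^{\geq}$, while you compute directly with Plancherel and the pointwise bound $\sum_k|\widehat{\phi_k}(\eta)|^2\lesssim 1$, handling the boundary block $\theta_j$ separately---both are the same Plancherel argument in slightly different clothing.
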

 
This type of  result is well-known (see e.g.\cite{CobosDominguez15}[Thm.\ 3.3]) in the theory of Besov spaces. As the above characterization is presumably less popular in the applied maths community, we provide an  elementary proof  for the convenience of the reader.

\begin{proof}
 The equivalence of the norms is a consequence of the fact that
 \begin{equation}
 \label{12}
 \|\sum_{k\ge j} \theta_k\|_{L^2}^2 \lesssim \sum_{k\ge j-1} \|\theta_k\|_{L^2}^2 \lesssim \| \sum_{k \ge j-2} \theta_{k}\|_{L^2}^2.
 \end{equation}
Indeed, considering, for instance, the first of the estimates, multiplying by $j^{2a-1}$ and summing over $j$ gives
\begin{align*}
\sum_{j\ge 1} j^{2a-1} \|\sum_{k\ge j} \theta_k\|_{L^2}^2  &\lesssim \sum_{j\ge 1} j^{2a-1} \sum_{k\ge j-1} \|\theta_k\|_{L^2}^2\\
& = \sum_{k\ge 1} \left(\sum_{j=1}^{k+1} j^{2a-1}\right)\|\theta_k\|_{L^2}^2\\
&\lesssim \sum_{k\ge 1} k^{2a} \|\theta_k\|_{L^2}^2,
\end{align*}
where we have used the fact that $\theta\ast\phi_0=0$ because $\theta$ was assumed to have zero mean. The converse estimate is established very similarly.

The verification of  \eqref{12} relies on the almost-orthogonality property of the Littlewood--Paley decomposition. Indeed, by the virtue of \eqref{27}, the phase blocks $\theta_k$ and $\theta_{\ell}$ are orthogonal unless $\ell\in\{k-1,k,k+1\}$. 
Therefore, we find on  the one hand by expanding the sums that
\begin{align*}
\|\sum_{k\ge j} \theta_k\|_{L^2}^2   = \sum_{k\ge j} \sum_{\ell = k-1,k,k+1} \intT \theta_k\theta_{\ell}\, \dx\lesssim \sum_{k\ge j-1}\| \theta_k\|_{L^2}^2 .
\end{align*} 
On the other hand, using \eqref{13} again, we may write 
\[
\|\theta_k\|_{L^2}^2  =  \| \sum_{\ell \ge j-1} \theta_k * \phi_{\ell}\|_{L^2}^2,
\]
and summation over $k$ gives
\[
\sum_{k\ge j} \|\theta_k\|_{L^2}^2  =  \sum_{k\ge j}\| \sum_{\ell \ge j-1} \theta_k * \phi_{\ell}\|_{L^2}^2 \le \sum_{k\ge 1}\| \Big(\sum_{\ell \ge j-1} \theta_{\ell}\Big)* \phi_k\|_{L^2}^2 \lesssim \| \sum_{\ell \ge j-1} \theta_{\ell}\|_{L^2}^2,
\]
where we have used the Littlewood--Paley characterization \eqref{24} of the $L^2$-norm. 
\end{proof} 

We will also require some interpolation identities between our norm and other quantities measuring logarithmic smoothness.

\begin{lemma} \label{L1}
Let $b, a\geq 0$ and $r\ge 2$ be given  and $\theta\in L^\infty(\T^d)\cap B^{\log ,a} $. Let $\eta_k=2^{kd}\eta(2^k\cdot)$ be a family of mean-free Schwartz functions for which $\widehat{\eta_1}$ is compactly supported.
\begin{itemize} 
\item[a)] If   $2a = br$, then  it holds \begin{equation}
\label{7}
\norm{\sup_{k\geq 0} k^{b}|\theta_k^\geq|}_{L^{r}}\lesssim\norm{\theta}_{L^\infty}^{1-\frac{b}{a}} \norm{\theta}_{B^{\log, a}}^{\frac{b}{a}}.
\end{equation}

\item[b)] If   $2a = br$, then  it holds \begin{equation}
\label{7a}
\norm{\sup_{k\geq 0} k^{b}|\theta_k|}_{L^{r}}\lesssim\norm{\theta}_{L^\infty}^{1-\frac{b}{a}} \norm{\theta}_{B^{\log, a}}^{\frac{b}{a}}.
\end{equation}

\item[c)] If $b<\frac{2a}r$ then
\begin{align}\label{15}
\norm{\left(\sum_{k\geq 1} k^{2b}|\theta*\eta_k|^2\right)^\frac{1}{2}}_{L^{r}}\lesssim\norm{\theta}_{L^{\infty}}^{1-\frac{b}{a}}\norm{\theta}_{B^{\log ,a}}^{\frac{b}{a}}.
\end{align}
\end{itemize}
\end{lemma}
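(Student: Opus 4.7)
My plan is to derive all three estimates from a common interpolation backbone: the Hölder inequality $\|f\|_{L^r} \le \|f\|_{L^\infty}^{1-2/r}\|f\|_{L^2}^{2/r}$ for $r \ge 2$, together with the $L^\infty$-boundedness of the Littlewood--Paley projections, which gives $\|\theta_k\|_{L^\infty}, \|\theta_k^\ge\|_{L^\infty}, \|\theta*\eta_k\|_{L^\infty} \lesssim \|\theta\|_{L^\infty}$ (since each is convolution with an $L^1$-normalized kernel). Under the relation $br = 2a$ the exponent $1-2/r$ then matches the required $1-b/a$ exactly, which is why the interpolation closes.

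Part (b) reduces to a few lines. Using $\sup_k a_k^r \le \sum_k a_k^r$ pointwise and integrating,
\[
\|\sup_k k^b|\theta_k|\|_{L^r}^r \le \sum_k k^{br}\|\theta_k\|_{L^r}^r \lesssim \|\theta\|_{L^\infty}^{r-2}\sum_k k^{2a}\|\theta_k\|_{L^2}^2 = \|\theta\|_{L^\infty}^{r-2}\|\theta\|_{B^{\log,a}}^2,
\]
after inserting the Hölder interpolation for each $\|\theta_k\|_{L^r}^r$ and using $br=2a$; taking $r$-th roots yields the claim. For part (c), Minkowski's integral inequality (valid because $r \ge 2$) moves the $L^r$-norm inside the $\ell^2$-sum, reducing matters to bounding $\sum_k k^{2b}\|\theta*\eta_k\|_{L^r}^2$. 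The compact support of $\widehat{\eta_1}$ gives $\|\theta*\eta_k\|_{L^2}^2 \lesssim \sum_{|j-k|\le C}\|\theta_j\|_{L^2}^2$, and after the $L^\infty$-$L^2$ Hölder interpolation one is left with $\sum_k k^{2b}\|\theta_k\|_{L^2}^{4/r}$. Writing this as $\sum_k (k^{2a}\|\theta_k\|_{L^2}^2)^{2/r}\cdot k^{2b-4a/r}$, the strict inequality $b<2a/r$ creates the room needed for a further discrete Hölder (or dyadic-staircase) estimate in the $k$-index that separates the Besov factor from a convergent tail.

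The main obstacle is part (a). A mechanical repetition of the (b)-strategy on $\theta_k^\ge$ would produce $\sum_k k^{2a}\|\theta_k^\ge\|_{L^2}^2$, which by the near-orthogonality of the Littlewood--Paley decomposition and an Abel summation equals (up to constants) $\sum_j j^{2a+1}\|\theta_j\|_{L^2}^2 \sim \|\theta\|_{B^{\log,a+1/2}}^2$, so half a logarithmic derivative is lost. To recover the correct Besov exponent I would introduce an $x$-dependent truncation $k_0(x)$ and split $\sup_k k^b|\theta_k^\ge(x)|$ into the low range $k \le k_0$, controlled trivially by $k_0^b\|\theta\|_{L^\infty}$, and the high range $k > k_0$, handled by Chebyshev at level $2$ using the pointwise decay $\|\theta_k^\ge\|_{L^2}^2 \le k^{1-2a}\|\theta\|_{B^{\log,a}}^2$ inherited from the equivalent norm of Lemma~\ref{lemma 1}. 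Optimizing $k_0(x)$ in a distribution-function (good-$\lambda$) argument should then reproduce the interpolation exponents $(1-b/a,\, b/a)$ with the sharp index $a$ in place of $a+\frac{1}{2}$. Making this optimization quantitative, and tracking the resulting constants carefully, is the core technical step of the whole lemma.
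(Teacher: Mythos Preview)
Your argument for part (b) is correct and in fact cleaner than the paper's: the paper treats (a) and (b) by the same maximal-function machinery, whereas your direct bound $\sup_k\le(\sum_k)^{1/r}$ followed by $L^\infty$--$L^2$ interpolation on each $\theta_k$ already closes with the sharp exponent.

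For part (a), however, your proposed good-$\lambda$ scheme does not avoid the loss you correctly diagnosed. Carry it out: since $|\theta_k^{\ge}|\lesssim\|\theta\|_{L^\infty}$, the level set $\{\sup_k k^b|\theta_k^{\ge}|>\lambda\}$ only sees $k\ge k_0(\lambda)\sim(\lambda/\|\theta\|_{L^\infty})^{1/b}$; Chebyshev at level $2$ together with $\sup^2\le\sum$ gives
\[
|\{\cdot>\lambda\}|\le \lambda^{-2}\sum_{k\ge k_0}k^{2b}\|\theta_k^{\ge}\|_{L^2}^2,
\]
and after integrating $\lambda^{r-1}$ and swapping sums one recovers exactly $\|\theta\|_{L^\infty}^{r-2}\sum_k k^{br}\|\theta_k^{\ge}\|_{L^2}^2=\|\theta\|_{L^\infty}^{r-2}\sum_k k^{2a}\|\theta_k^{\ge}\|_{L^2}^2$, which is the lossy $\mathbf{B}^{\log,a+1/2}$ quantity. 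The single-term bound $\|\theta_k^{\ge}\|_{L^2}^2\le k^{1-2a}\|\theta\|_{\mathbf B^{\log,a}}^2$ does not help either; inserting it leads to a divergent $\lambda$-integral. What is missing is the paper's device: write $k^{br}\sim\sum_{m\le k}m^{br-1}$ so that
\[
\sup_k k^{br}|\theta_k^{\ge}|^r\lesssim \sum_{m\ge1}m^{2a-1}\sup_{k\ge m}|\theta_k^{\ge}|^r,
\]
and then invoke the maximal inequality $\|\sup_{k\ge m}|\theta_k^{\ge}|\|_{L^r}\lesssim\|\theta_{m-2}^{\ge}\|_{L^r}$ (a consequence of $\theta_k^{\ge}=(\theta_{m-2}^{\ge})_k^{\ge}$ and the $L^r$-boundedness of the partial-sum maximal operator $\sup_j|f\ast\psi_j|$). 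After $L^\infty$--$L^2$ interpolation this lands on $\sum_m m^{2a-1}\|\theta_m^{\ge}\|_{L^2}^2=\|\theta\|_{\mathbf B^{\log,a}}^2$ with the correct exponent $2a-1$.

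Part (c) also has a gap. First, compact support of $\widehat{\eta_1}$ alone gives only one-sided localisation $\|\theta*\eta_k\|_{L^2}^2\lesssim\sum_{j\lesssim k}\|\theta_j\|_{L^2}^2$; the annular localisation $|j-k|\le C$ needs the mean-zero property of $\eta$ via $|\widehat{\eta_k}(\xi)|\lesssim 2^{-k}|\xi|$. More seriously, once you apply Minkowski $\|\cdot\|_{L^r(\ell^2)}\le\|\cdot\|_{\ell^2(L^r)}$ you have discarded the square-function structure, and the Littlewood--Paley theorem is no longer available. Any discrete H\"older on $\sum_k k^{2b}\|\theta_k\|_{L^2}^{4/r}$ then closes only under the stronger restriction $2br<4a+2-r$, strictly smaller than the stated range $br<2a$ when $r>2$. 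The paper instead applies H\"older \emph{pointwise} on the square function,
\[
\Big(\sum_k k^{2b}|\theta*\eta_k|^2\Big)^{1/2}\le\Big(\sum_k k^{2a}|\theta*\eta_k|^2\Big)^{b/2a}\Big(\sum_k|\theta*\eta_k|^2\Big)^{(a-b)/2a},
\]
then H\"older in $x$ to split the $L^r$-norm into an $L^2$ factor (controlled by $\|\theta\|_{B^{\log,a}}$ via Plancherel) and an $L^s$ square function (controlled by $\|\theta\|_{L^\infty}$ via the Littlewood--Paley theorem, with $s$ finite precisely because $b<2a/r$). Keeping the square function intact is what makes the full range accessible.
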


We remark that establishing part c) for $b=\frac{2a}{r}$ would allow for $a=\frac{p}{2}$ in Theorem \ref{T1} and \ref{T2} (and subsequently also in Theorem \ref{T4}).

\begin{proof}
a) \& b) The arguments for \eqref{7} and \eqref{7a} are identical. We focus on the first one. By using that $\sum_{m=1}^km^{br-1}\simeq k^{br}$ and pulling the supremum inside the sum we obtain that
\begin{align}
&\norm{\sup_{k\geq 1} k^{b}|\theta_k^\geq|}_{L^{r}}^r\lesssim \norm{\sup_{k\geq 1}  \sum_{m=1}^k m^{br-1}|\theta_k^\geq|^{r} }_{L^{1}}\leq \norm{ \sum_{m\geq 1} m^{br-1}\sup_{k\geq m}|\theta_k^\geq|^{r}}_{L^{1}} .
\end{align}
Pulling the norm inside via the triangle inequality, we see that this can be further  estimated by 
\begin{align}
\norm{\sup_{k\geq 1} k^{b}|\theta_k^\geq|}_{L^{r}}^r\lesssim
\sum_{m\geq 1} m^{br-1}\norm{\sup_{k\geq m} |\theta_k^\geq|^{r}}_{L^{1}}=\sum_{m\geq 1} m^{br-1}\norm{\sup_{k\geq m} |\theta_k^\geq|}_{L^{r}}^r.
\end{align}
To estimate the suprema we note that, on the one hand, for $k\geq m$ we have that $\theta_k^\geq=(\theta_{m-2}^\geq)_k^\geq$ (with $\theta_0^\geq=\theta_{-1}^\geq=\theta$). On the other hand, we also have
\begin{align}\norm{\sup_{k\geq 1} |f_k^{\geq}|}_{L^r}=\norm{\sup_{k\geq 1} |f-f_{k-1}^\leq|}_{L^r}\leq \norm{f}_{L^r}+\norm{\sup_{k\geq 0} |f_k^\leq|}_{L^r}\lesssim \norm{f}_{L^r}\end{align} for all $f\in L^r$, where in the last step we have used the classical fact that the maximal function $\sup_{k\geq 1} f*\psi_{k-1}$ is bounded on $L^r$, see, e.g., Theorem 2.1.4 in  \cite{Grafakos_mod}. Applying the latter estimate to $\theta_{m-2}^\geq$ yields that
\[
\norm{\sup_{k\geq m} |\theta_k^\geq|}_{L^{r}}\lesssim \norm{\theta_{m-2}^\geq }_{L^r},
\]
and thus, inserting this estimate in the term above gives the upper bound
\begin{align} 
\norm{\sup_{k\geq 1} k^{b}|\theta_k^\geq|}_{L^{r}}^r\lesssim
\sum_{m\geq 1} m^{br-1}\norm{\theta_{m-2}^\geq}_{L^{r}}^{r}.
\end{align}
It remains to interpolate the $L^r$-norm   between $L^\infty$ and $L^2$, and we obtain the bound
\begin{align}
\norm{\sup_{k\geq 1} k^{b}|\theta_k^\geq|}_{L^{r}}^r  \lesssim  \sum_{m\geq 1} m^{br-1}\norm{ \theta_{m-2}^\geq}_{L^{2}}^{2}\norm{\theta_{m-2}^\geq}_{L^\infty}^{r-2}  \lesssim\norm{\theta}_{L^\infty}^{r-2} \sum_{m\geq 1} m^{br-1}\norm{ \theta_{m}^\geq}_{L^{2}}^{2},
\end{align}
which proves \eqref{7}, as desired.

\medskip

c) Using Hölder's inequality first on the sum and then on the norms we obtain \begin{equation}\label{eq 9}
\begin{aligned}
\norm{\Big(\sum_{k\geq 1} k^{2b}|\theta*\eta_k|^2\Big)^{\frac{1}{2}}}_{L^{r}}&\leq \norm{\Big(\sum_{k\geq 1} k^{2a}|\theta*\eta_k|^2\Big)^{\frac{b}{2a}}\Big(\sum_{k\geq 1} |\theta*\eta_k|^2\Big)^{\frac{a-b}{2a}}}_{L^{r}}\\
& \leq \norm{\Big(\sum_{k\geq 1} k^{2a}|\theta*\eta_k|^2\Big)^{\frac{1}{2}}}_{L^2}^{\frac{b}{a}}\norm{\Big(\sum_{k\geq 1} |\theta*\eta_k|^2\Big)^{\frac{1}{2}}}_{L^{s}}^{\frac{a-b}{a}},
\end{aligned}
\end{equation}
where we have set $s = {2(a-b)r}/{(2a-br)}$ for abbreviation. We notice that all exponents (including $s$) are finite provided that $b<\frac{2a}r$, which also guarantees that $a>b$ since $r\ge 2$ by assumption. 
Therefore, we may apply the Littlewood--Paley theorem to the norm on the right and we obtain with the help of Jensen's inequality that 
\begin{align}\label{eq 10}\norm{\Big(\sum_{k\geq 1} |\theta*\eta_k|^2\Big)^{\frac{1}{2} }}_{L^{s}}\lesssim\norm{\theta}_{L^{s} } \lesssim \norm{\theta}_{L^\infty}.
\end{align}

It remains to estimate the first term on the right-hand side of \eqref{eq 9}. Splitting into phase blocks and using almost orthogonality and Plancherel's theorem twice, we may estimate  \begin{align}
\norm{\Big(\sum_{k\geq 1} k^{2a}|\theta*\eta_k|^2\Big)^{\frac{1}{2}}}_{L^2}^2&\le \sum_{j\ge 1} \norm{\Big(\sum_{k\geq 1} k^{2a}|\theta_j *\eta_k|^2\Big)^{\frac{1}{2}}}_{L^2}^2\\
&\lesssim\sum_{j\ge 1} \sum_{k\geq 1} \sum_{\xi\in \Z^d}  k^{2a}|\widehat{\theta_j}(\xi)|^2|\widehat{\eta_k}(\xi)|^2\\
&\le  \sum_{j\geq 1}\norm{\theta_j}_{L^2}^2\sum_{k\geq 1} k^{2a}\sup_{\xi\in B_{2^j}(0)\backslash B_{2^{j-2}}(0)}|\widehat{\eta_k}(\xi)|^2.
\end{align}
If $R$ is such that the support of $\widehat{\eta_1}$ is contained in $B_R(0)$, then the supremum in the sum is only nonzero when $2^kR\geq 2^{j-2}$. In that case we may use that $\eta_k$ has zero mean, $\widehat{\eta_k}(0)=0$, and estimate the supremum with 
\begin{align}
|\widehat{\eta_k}(\xi)| \le 2^j |\grad \widehat{\eta_k}(\xi)| = 2^{j-k} |\grad \widehat{\eta_1}(2^{-k}\xi)| \le 2^{j-k}\|\grad\widehat{\eta_1}\|_{L^{\infty}} \lesssim 2^{j-k},
\end{align}
for any $\xi \in B_{2^j}(0)\setminus B_{2^{j-2}}(0)$.  Hence the inner sum can be estimated by
 \begin{align}
\sum_{k\ge \lfloor j-2- \ln_2 R\rfloor}k^{2a}2^{j-k}&\le \sum_{k\ge \lfloor j-2- \ln_2 R\rfloor}|k-j|^{2a}2^{j-k}  + j^{2a} \sum_{k\ge \lfloor j-2- \ln_2 R\rfloor} 2^{j-k} \\
&\lesssim 1+ j^{2a}\lesssim j^{2a}.
\end{align}
This yields 
\begin{align}
\label{eq 8}\norm{\Big(\sum_{k\geq 1} k^{2a}|\theta*\eta_k|^2\Big)^{\frac{1}{2}}}_{L^2}\lesssim \norm{\theta}_{B^{\log ,a }}.
\end{align}

Inserting \eqref{eq 10} and \eqref{eq 8} into \eqref{eq 9} yields the statement.\end{proof}

\section{Regularity estimates. Proofs of Theorems \ref{T1} and \ref{T2}}\label{S3}

In this section, we turn to the proofs of Theorems \ref{T1} and \ref{T2} simultaneously. The only difference is the occurrence of the diffusion term in \eqref{1}, which is, in fact, easily controlled in Besov norms. We thus study \eqref{1} rather than \eqref{eq25} and allow for $\kappa=0$ when we consider the latter. For notational simplicity, we will write $\theta = \thetak$, also for positive diffusivities. 

\begin{proof}[Proof of Theorems \ref{T1} \& \ref{T2}]
We notice that the evolution of the high frequency contributions $\theta_k^\geq$ can be written as a transport equation with  a  forcing term,
\[
\partial_t \theta_k^\geq +u\cdot \grad \theta_k^\geq  
 =\kappa \laplace \theta_k^{\ge} + [ \psi_{k-1}\ast,u\cdot ]\grad\theta,
\]
where the second term on the  right-hand side is the commutator of the operations ``multiply by $u$'' and ``convolute with $\psi_{k-1}$'', that we have already seen in the introduction. Thanks to the imposed regularity of  $u$ and the mollifying effect of the convolution, the high frequency part $\theta_k^\geq$ can be interpreted as a distributional solution, which is renormalized in the sense of DiPerna and Lions \cite{DiPernaLions89}. We may thus compute the rate of change of the variance in a straightforward way,
\begin{equation}
\label{2}
\frac12\frac{\text{d}}{\text{d}t}\intT (\theta_k^\ge)^2\dx + \kappa \intT |\grad\theta_k^{\ge}|^2\, \dx=\intT \theta_k^\ge [ \psi_{k-1}\ast,u\cdot ]\grad\theta\, \dx.
\end{equation}
By an integration by parts, we rewrite the commutator as
\[
[\psi_{k-1}\ast,u\cdot]\grad\theta(x) = \intR \theta(x-y) (u(x)-u(x-y))\cdot\grad\psi_{k-1}(y)\, \dy,
\]
and thus, \eqref{2} becomes
\begin{align*}
\mel
\frac12\frac{\text{d}}{\text{d}t}\intT (\theta_k^\geq)^2\dx  + \kappa \intT |\grad\theta_k^{\ge}|^2\, \dx\\
&= \intT\intR \theta_k^\ge(x)\theta(x-y) (u(x)-u(x-y))\cdot\grad\psi_{k-1}(y)\, \dy\dx.
\end{align*}
We now expand $\theta$ and $u$ into the phase blocks and use Parseval's identity to notice that
\[
\intT \theta_k^\geq u_m\cdot \grad \psi_{k-1} \ast \theta_{\ell}\, \dx = \sum_{\eta,\eta'\in\Z^d} {\widehat{\theta_k^\geq}(-\eta)} \widehat{u_m} (\eta-\eta') \cdot \widehat{\grad \psi_{k-1}} (\eta')\widehat{\theta_{\ell}}(\eta').
\]
Using the definition of the phase blocks, we observe that the summands are nonzero only if $ \ell\le k$ and vanish if $2^{k-3} \ge 2^m+2^{\ell}$ for some $\ell\le k-1$.  Similarly, we find that
\[
\intT \theta_k^\geq\div\left(\psi_{k-1}\ast(u_m\theta_{\ell})\right)\, \dx = \sum_{\eta,\eta'\in\Z^d} {\widehat{\theta_k^\geq}(-\eta)} \widehat{\grad\psi_{k-1}}(\eta) \cdot \widehat{u_m}(\eta')\widehat{\theta_{\ell}}(\eta-\eta')
\]
is nonzero only if $2^{\ell-2}\leq 2^{k-1}+2^m$. We thus conclude that both contributions are $0$ if $|\ell|> k+4$ and $|m|< k+3$ and hence
\begin{align}
\mel\frac{1}{2}\frac{\text{d}}{\text{d}t}\norm{\theta}_{\bold{B}^{\log, a}}^2 + \kappa \|\grad \theta\|_{\bold{B}^{\log, a}}^2\\
&=\frac12\frac{\text{d}}{\text{d}t}\sum_{k\geq 1}k^{2a-1}\intT (\theta_k^\ge)^2\dx  + \kappa \sum_{k\ge 1} k^{2a-1} \intT |\grad\theta_k^\ge|^2\dx\\
&=\sum_{k\geq 1}k^{\alpha} \intT\intR \theta_k^\geq(x)\theta(x-y) u_{k+3}^\geq(x)\cdot\grad\psi_{k-1}(y)\, \dy\dx\\
&\qquad-\sum_{k\geq 1}k^{\alpha}\intT\intR \theta_k^\geq(x)\theta(x-y)u_{k+3}^{\geq }(x-y)\cdot\grad\psi_{k-1}(y)\, \dy\dx\\
&\qquad+\sum_{k\geq 1}k^{\alpha} \intT\intR \theta_k^\geq(x)\theta_{k+4}^{\leq}(x-y) \left(u_{k+2}^\leq(x)- u_{k+2}^{\leq }(x-y)\right)\cdot\grad\psi_{k-1}(y)\, \dy\dx\\
&=:\,\mathrm{I} - \mathrm{II} + \mathrm{III},
\end{align}
where we have set $\alpha  =2a-1$.
We are going to show that  $\mathrm{I}$, $\mathrm{II}$  and $\mathrm{III}$ can all be estimated by
\begin{align}
\label{eq 4}
|\mathrm{I}|+ |\mathrm{II}| + |\mathrm{III}| \lesssim \norm{\nabla u}_{L^p}\norm{\theta}_{L^\infty}^{\frac{1}{a}}\norm{\theta}_{\bold{B}^{\log ,a}}^{\frac{2a-1}{a}},
\end{align}
as long as $a<\frac{p}{2}$, so that the above identity implies  the differential inequality
\begin{equation}
\label{31}
a \frac{\text{d}}{\text{d}t} \norm{\theta}_{\bold{B}^{\log, a}}^{\frac1{a}} + \kappa \| \theta\|_{\bold{B}^{\log,a}}^{\frac1a-2}\|\grad \theta\|_{\bold{B}^{\log,a}}^2 \lesssim \|\grad u\|_{L^p} \|\theta\|_{L^{\infty}}^{\frac1{a}}.
\end{equation}
An integration in time yields the desired estimate in Theorem \ref{T1} via Lemma \ref{lemma 1} 
 and the fact that the amplitude remains bounded, cf.~\eqref{23} with $q=\infty$. To obtain the additional control on the dissipation in the statement of Theorem \ref{T2}, we make use of the bound of Theorem \ref{T1} to estimate the prefactor in the dissipation term in \eqref{31}.

We now turn to the proof of \eqref{eq 4}.
To fix some notation, we write  $\alpha=\alpha_1+\alpha_2$ for some $\alpha_1,\alpha_2>0$ and we take $q,s$ such that $\frac{1}{p}+\frac{1}{q}+\frac{1}{s}=1$.

\medskip

\emph{Estimate of $\mathrm{I}$.} 
The argument in this case relies on the fact that the high frequencies of $u$ become small due to the control on $\nabla u$.   
Splitting $u_{k+3}^\geq$ into phase blocks,  we may write 
 \begin{align}\label{eq 2}
\mathrm{I} = \sum_{n\geq 3}^\infty2^{-n}\sum_{k\geq 1}^\infty \int_{\T^d} k^{\alpha}\theta_k^\geq (2^{k+n}u_{k+n}) \cdot \left(\theta*(2^{-k}\nabla\psi_{k-1})\right)\, \dx.
\end{align}
By exploiting the geometric series, it suffices to estimate the sum over $k$ uniformly in $n$. Invoking the Cauchy--Schwarz inequality for the sums and subsequently the H\"older inequality for the integrals, we find that
\begin{align*}
|\mathrm{I}| &\le \int_{\T^d} \Big(\sup_{k\ge 1} k^{\alpha_1 }|\theta_k^\geq |\Big) \Big(\sum_{k\ge 1} |2^{k+n}u_{k+n}|^2 \Big)^{\frac12} \Big(\sum_{k\ge1} k^{2\alpha_2}| \theta*(2^{-k}\nabla\psi_{k-1})|^2\Big)^{\frac12}\, \dx\\
&\le \norm{\sup k^{\alpha_1}|\theta_k^\geq|}_{L^{q}}\norm{\Big(\sum_{k\geq 1} |2^{k+n}u_{k+n}|^2\Big)^{\frac{1}{2}}}_{L^{p}} \norm{\Big(\sum_{k\geq 1}k^{2\alpha_2}|\theta*(2^{-k}\nabla\psi_{k-1})|^2\Big)^\frac{1}{2}}_{L^{s}}.
\end{align*}
We may now use the Littlewood--Paley characterization of Sobolev norms, see, e.g., Theorem 1.3.8 in  \cite{Grafakos_mod}, to estimate the velocity term,
\[
\norm{\Big(\sum_{k\geq 1} |2^{k+n}u_{k+n}|^2\Big)^{\frac{1}{2}}}_{L^{p}} \lesssim \|\grad u\|_{L^p}.
\]
The other two terms on the right-hand side of the above estimate are controlled with the help of the  interpolation inequalities from Lemma \ref{L1}. More precisely, we  apply   \eqref{7}   with $b=\alpha_1=\frac{\alpha}{2}$ and $r=q=\frac{2p}{p-1}$  to the first and \eqref{15} with $r=s=\frac{2p}{p-1}$ and $b=\alpha_2=\frac{\alpha}{2}$ to the third term.  These choices are possible if
\[
\frac{\alpha p}{p-1}<2a,
\]
which is equivalent to $a<\frac{p}{2}$. It follows that the term $\mathrm{I}$ is bounded as in  \eqref{eq 4}.

\medskip

\emph{Estimate of $\mathrm{II}$.} The treatment of $\mathrm{II}$ proceeds very similar to the one of $\mathrm{I}$.  As a first step, we rewrite the term as
\[
\mathrm{II} = \sum_{k\ge 1}k^\alpha \intT  \theta u_{k+3}^{\ge} \cdot \left(\theta_k^\ge * \grad\psi_{k-1}\right) \, \dx,
\]
and we notice by the definition of the multipliers that $\theta_k^{\ge}*\grad\psi_{k-1} = \theta_k\ast \grad\psi_{k-1}$, whose Fourier support concentrates on the annulus $B_{2^{k-1}}(0)\setminus B_{2^{k-2}}(0)$. Moreover, because   the Fourier transform of $u_{k+3}^{\ge}$ vanishes  on $B_{2^{k+1}}(0)$, the small frequency part $\theta_{k}^{\le}$ of $\theta$ does not contribute to $\mathrm{II}$ as can be verified with the help of Parseval's identity.  We may thus write
\[
\mathrm{II}= \sum_{k\ge 1}k^\alpha \intT \theta_{k+1}^{\ge}  u_{k+3}^{\ge} \cdot \theta_k * \grad\psi_{k-1}\, \dx,
\]
which is, in fact,  very similar to $\mathrm{I}$, and, therefore, the proof of the estimate of $\mathrm{II}$ is identical to the one of $\mathrm{I}$.

\medskip

\emph{Estimate of $\mathrm{III}$.} This is a commutator estimate in which the velocity difference is controlled by the velocity gradient.
Considering the inner integral, we have and write
\begin{align*}
\mel \intR\theta_{k+4}^\leq(x-y)(u_{k+2}^\leq(x)-u_{k+2}^\leq(x-y))\nabla\psi_{k-1}(y)\dy\\
& = \int_0^1 \intR\theta_{k+4}^\leq(x-y) \grad u_{k+2}^\leq(x-sy) : \nabla\psi_{k-1}(y)\otimes y\, \dy\ds\\
& =  \int_0^1 \intR\theta_{k+4}^\leq(x-y) \left( \grad u_{k+2}^\leq(x-sy) - \grad u_{k+2}^\leq(x)\right) : \nabla\psi_{k-1}(y)\otimes y\, \dy\ds\\
&\qquad  +  \grad u_{k+2}^\leq(x): \intR\theta_{k+4}^\leq(x-y)    \nabla\psi_{k-1}(y)\otimes y\, \dy  \\
&=:g_k^1(x)+g_k^2(x).
\end{align*}

We first deal with the second contribution $g_k^2$, using some cancellation effects. Indeed, we decompose
\begin{align}
\nabla\psi_{k-1}\otimes y =\Phi_k -I\psi_{k-1},\quad \mbox{where }\Phi_k = \nabla(y\psi_{k-1}),
\end{align}
and notice that $\nabla u_{k+2}^{\leq}:I=\div u_{k+2}^{\leq}=0$ by the incompressibility assumption on $u$, and that $\Phi_k$ is an even function. It follows that we may rewrite 
\[
g_k^2  = \grad u_{k+2}^{\le}: (\theta_{k+4}^{\le }\ast \Phi_k).
\]
Thanks to the fact that the   Fourier transform of $\Phi_k$ vanishes  outside the annulus $B_{2^{k-1}}(0)\backslash B_{2^{k-2}}(0)$, we have $ \theta*\Phi_k=(\theta_k + \theta_{k-1})\ast \Phi_k$. Inserting this information in the full integral, we obtain
 \begin{align}
\sum_{k\geq 1}k^{\alpha}\int_{\T^d} \theta_k^{\ge} g_k^2 \, \dx = \sum_{k\geq 1}k^{\alpha}\int_{\T^d} \theta_{k}^{\geq}\,\nabla u_{k+2}^{\leq}:\left(\theta*\eta_{k-1}^1 *\Phi_k\right)\, \dy,
\end{align}
where we have introduced the mollifiers $\eta_k^j = \phi_k+\dots \phi_{k+j}$. We may now apply Parseval's identity in a similar manner as before and observe that only the frequencies of $\theta_{k}^{\geq}$ that are smaller than $ 2^{k+3}$  contribute to the integral and we may hence replace $\theta_{k}^{\geq}$ by $\theta*\eta_k^4$. We thus arrive at
 \begin{align}
\sum_{k\geq 1}k^{\alpha}\int_{\T^d} \theta_k^{\ge} g_k^2 \, \dx = \sum_{k\geq 1}k^{\alpha}\int_{\T^d} \theta*\eta_{k}^4\,\nabla u_{k+2}^{\leq}:(\theta*\eta_{k-1}^1 *\Phi_k)\, \dy.
\end{align}
Applying Parseval's identity again, 
we see that we may furthermore rewrite this contribution as 
 \begin{align}
\sum_{k\geq 1}k^{\alpha}\int_{\T^d} \theta_k^{\ge} g_k^2 \, \dx & = \sum_{k\geq 1}k^{\alpha}\int_{\T^d} \left(\theta*\eta_{k}^4\right)\,\nabla u:(\theta*\eta_{k-1}^1 *\Phi_k)\, \dy\\
&\qquad  - \sum_{k\geq 1}k^{\alpha}\int_{\T^d} \left(\theta*\eta_{k}^4\right)\,\left(\nabla u\ast \eta_{k+3}^3\right):(\theta*\eta_{k-1}^1 *\Phi_k)\, \dy .
\end{align}
Using the Cauchy--Schwarz inequality in the sum, followed by the H\"older inequality in the integral, we estimate the first term by
\begin{align*}
\mel 
\intT |\grad u| \Big( \sum_{k\ge 1} k^{\alpha} |\theta*\eta_k^4|^2\Big)^{\frac12} \Big(\sum_{k\ge 1} k^\alpha |\theta*\eta_{k-1}^1* \Phi_k|^2\Big)^{\frac12}\, \dx\\ 
& \le \|\grad u\|_{L^p} \|\Big( \sum_{k\ge 1} k^{\alpha} |\theta*\eta_k^4|^2\Big)^{\frac12}\|_{L^q} \|\Big(\sum_{k\ge 1} k^\alpha |\theta*\eta_{k-1}^1* \Phi_k|^2\Big)^{\frac12}\|_{L^s},
\end{align*}
which can be controlled by the right-hand side of \eqref{eq 4} via the interpolation inequality \eqref{15} of Lemma \ref{L1} with $r=q=s$ and $b=\alpha$. 
Applying the same tools to the second term, we find the bound
\begin{align}
\mel \int_{\T^d} \Big(\sup_{k\ge 1} k^{\alpha_1} |\theta*\eta_{k}^4|\Big) \Big( \sum_{k\geq 1} |\nabla u\ast \eta_{k+3}^3|^2\Big)^{\frac12}\Big(\sum_{k\geq 1} k^{2 \alpha_2}|\theta*\eta_{k-1}^1 *\Phi_k|^2\Big)^{\frac12}\, \dy\\
&\le  \| \Big(\sum_{k\ge 1}|\nabla u*\eta^3_k|^2\Big)^{\frac12}\|_{L^p} \|\sup_{k\ge 1} k^{\alpha_1} |\theta*\eta_{k}^4|\|_{L^q} \|\Big(\sum_{k\geq 1} k^{2 \alpha_2}|\theta*\eta_{k-1}^1 *\Phi_k|^2\Big)^{\frac12}\|_{L^s},
\end{align}
which is, analogously to the estimates for $\mathrm{I}$ and $\mathrm{II}$, controlled by using the interpolations \eqref{7a} and \eqref{15} from Lemma \ref{L1} and the Littlewood--Paley theorem \eqref{28}.

We finally turn to the $g^1_k$ term. Noticing that \begin{align}
&\intR \left( \grad u_{k+2}^\leq(x-sy) - \grad u_{k+2}^\leq(x)\right) : \nabla\psi_{k-1}(y)\otimes y\, \dy\ds\\
=&\intR \psi_{k-1}\div_y(y\cdot\nabla u_{k+2}^\leq(x-sy))-\scalar{\div_y u_{k+2}^\leq(x-sy)}{\nabla\psi_{k-1}(y)}\, \dy\ds=0
\end{align} we may add an extra $\theta_{k+4}^\leq(x)$ in $g_k^1$ and obtain 
\begin{align*}
  g_k^1(x) 
& =\int_0^1 \intR\left(\theta_{k+4}^\leq(x-y) -\theta_{k+4}^\le(x)\right)\\
 & \qquad \qquad \times \left( \grad u_{k+2}^\leq(x-sy) - \grad u_{k+2}^\leq(x)\right) : \nabla\psi_{k-1}(y)\otimes y\, \dy\ds.
\end{align*}
We can further reformulate this	 by expressing the differences in term of  the mean values,
\begin{align*}
 g_k^1(x) 
& =  \int_0^1\int_0^1\int_0^1 \intR y\cdot \grad \theta_{k+4}^\leq(x-ry)  
 \grad^2 u_{k+2}^\leq(x-st  y)   : \nabla\psi_{k-1}(y)\otimes y\otimes y\, \dy\dr\ds\dt.
\end{align*}
Splitting $\theta$ and $u$ into phase blocks, and using the convention that $\theta_{\ell} = 0$ and $u_m=0$ for $\ell,m\le 0$, the latter is furthermore controlled by
\begin{align*}
 |g_k^1(x) |
&\le \sum_{j\ge -4} \sum_{n\ge -2} \int_0^1\int_0^1\int_0^1 \intR |\grad\theta_{k-j}(x-ry)|\\
&\qquad \qquad \qquad \times  |\grad^2 u_{k-n}(x-sty)|   |\grad \psi_{k-1}(y)||y|^3\, \dy\dr\ds\dt\\
& = \sum_{j\ge -4} 2^{-j}\sum_{n\ge -2}2^{-n} \int_0^1\int_0^1\int_0^1\intR 2^{j-k}|\grad\theta_{k-j}(x-ry)|\\
&\qquad \qquad \qquad \times 2^{n-k} |\grad^2 u_{k-n}(x-sty)|   \rho_k(y)\, \dy\dr\ds\dt,
\end{align*}
where we have introduced $\rho_k(y) = 4^k|y|^3 |\grad\psi_{k-1}(y)|$ for notational convenience. Multiplying by $k^{\alpha_2}$, summing over $k$ and using the H\"older inequality then gives
\begin{align*}
\sum_{k\ge 1}  k^{\alpha_2} |g_k^1(x)| &\le  \sum_{j\ge -4} 2^{-j}\sum_{n\ge -2}2^{-n} \int_0^1\int_0^1\int_0^1\intR \Big( \sum_{k\ge 1} k^{2\alpha_2} 2^{2(j-k)}|\grad\theta_{k-j}(x-ry)|^2\Big)^{\frac12}\\
&\qquad \qquad \qquad \times\Big(\sum_{k\ge 1}  2^{2(n-k)} |\grad^2 u_{k-n}(x-sty)|^2\Big)^{\frac12}   \rho_k(y)\, \dy\dr\ds\dt,
\end{align*}
and therefore, integrating against $k^{\alpha_1} \theta_k^{\ge}$ and using the H\"older inequality in the integrals, we deduce that
\begin{align*}
\mel \sum_{k\ge 1}  k^{\alpha} \intT |\theta_k^{\ge}||g_k^1| \, \dx\\
&\le  \sum_{j\ge -4} 2^{-j}\sum_{n\ge -2}2^{-n} \int_0^1\int_0^1\int_0^1\intR  \|\sup_{k\ge 1} k^{\alpha_1}|\theta_k^{\ge}|\|_{L^q} \\
 &\qquad \qquad \qquad\times\| \Big( \sum_{k\ge 1} k^{2\alpha_2} 2^{2(j-k)}|\grad\theta_{k-j}(\cdot-ry)|^2\Big)^{\frac12}\|_{L^s}\\
&\qquad \qquad \qquad \times \|\Big(\sum_{k\ge 1}  2^{2(n-k)} |\grad^2 u_{k-n}(\cdot-sty)|^2\Big)^{\frac12} \|_{L^p}  \rho_k(y)\, \dy\dr\ds\dt.
\end{align*}
We make now make use of the periodicity of the problems and the convergence of the geometric series to deduce 
\begin{align*}
\sum_{k\ge 1}  k^{\alpha} \intT |\theta_k^{\ge}||g_k^1| \, \dx
& \le \|\sup_{k\ge 1} k^{\alpha_1}|\theta_k^{\ge}|\|_{L^q} \| \Big( \sum_{\ell\ge 1} \ell^{2\alpha_2} 2^{-2\ell}|\grad\theta_{\ell} |^2\Big)^{\frac12}\|_{L^s}\\
&\qquad \qquad\times  \|\Big(\sum_{m\ge 1}  2^{-2m} |\grad^2 u_{m} |^2\Big)^{\frac12} \|_{L^p} \| \rho_k\|_{L^1}.
\end{align*}
The estimate of the right-hand side proceeds as before via Lemma \ref{L1}. Notice only that we can write $2^{-\ell}\grad\theta_{\ell} = \theta\ast\eta_{\ell}$ with $\eta_{\ell} = 2^{-\ell}\grad\phi_{\ell}$ satisfying the hypothesis of the lemma, and that $\|\rho_k\|_{L^1} = \|\rho_1\|_{L^1}\sim 1$ due to scaling.

This completes the estimate of the term $\mathrm{III}$ and finishes the proof.
\end{proof}

\section{Estimates for the zero-diffusivity limit. Proof of Theorem \ref{T4}}\label{S4}

We finally turn to the proof of Theorem \ref{T4}. It makes use of the following interpolation result.
\begin{lemma}\label{L3}
For any $\ell \ge 2$, it holds that
\[
\|\grad \theta\|_{L^2} \lesssim \ell\left(\frac1{\log\ell}\right)^{a}  \| \theta\|_{B^{\log,a}}+ \left(\frac1{\log\ell}\right)^{a} \|\grad \theta\|_{B^{\log,a}}.
\]
\end{lemma}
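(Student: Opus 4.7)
The plan is to run a standard dyadic-split interpolation: decompose $\grad\theta$ into Littlewood--Paley pieces, cut at a level $K$ to be optimized, bound the low-frequency part via a Bernstein inequality and the high-frequency part via the weight $k^{-2a}$.

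First, since $\theta$ is mean-free, the almost orthogonality of the Littlewood--Paley blocks (a direct consequence of \eqref{13} and Plancherel in $L^2$) yields
\[
\|\grad \theta\|_{L^2}^2 \sim \sum_{k \geq 1} \|\grad \theta_k\|_{L^2}^2.
\]
Fixing an integer threshold $K \geq 1$ to be chosen later, I split this sum at $K$. The high-frequency tail is immediate: since $a>0$,
\[
\sum_{k > K} \|\grad \theta_k\|_{L^2}^2 \leq \frac{1}{K^{2a}} \sum_{k > K} k^{2a}\|\grad\theta_k\|_{L^2}^2 \leq \frac{1}{K^{2a}} \|\grad\theta\|_{B^{\log,a}}^2 .
\]

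For the low-frequency part, I use the Bernstein inequality $\|\grad \theta_k\|_{L^2} \lesssim 2^k \|\theta_k\|_{L^2}$, which is available because of the frequency localization $\spt \widehat{\phi_k} \subset B_{2^k}(0)$ recorded in \eqref{13}. Then
\[
\sum_{k \leq K} \|\grad \theta_k\|_{L^2}^2 \lesssim \sum_{k \leq K} \frac{4^k}{k^{2a}} \cdot k^{2a} \|\theta_k\|_{L^2}^2 \lesssim \frac{4^K}{K^{2a}} \|\theta\|_{B^{\log,a}}^2,
\]
where the last step uses that $k \mapsto 4^k/k^{2a}$ is eventually monotone increasing, so that for $K$ beyond some $K_0(a)$ its maximum over $\{1,\dots,K\}$ is comparable to the endpoint value $4^K/K^{2a}$; finitely many small values of $K$ are either absorbed into the implicit constant or fall in a range where $\log\ell$ is bounded above and the lemma is trivial.

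Combining the two estimates and taking square roots yields
\[
\|\grad \theta\|_{L^2} \lesssim \frac{2^K}{K^a}\, \|\theta\|_{B^{\log,a}} + \frac{1}{K^a}\, \|\grad \theta\|_{B^{\log,a}}.
\]
The final step is to optimize: choosing $K = \lfloor \log_2 \ell \rfloor$ gives $2^K \leq \ell$ and $K \sim \log\ell$ for $\ell \geq 2$, which delivers exactly the claimed bound. There is no real obstacle here --- the argument is purely an interpolation by dyadic decomposition; the only mild technicality is the monotonicity check for $4^k/k^{2a}$, which is elementary.
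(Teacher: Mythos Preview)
Your proof is correct and follows essentially the same approach as the paper's: a dyadic split at level $K\sim\log_2\ell$, Bernstein on the low frequencies, and the weight $k^{-2a}$ on the high frequencies. The paper handles the monotonicity threshold $K_0$ for $k\mapsto 4^k/k^{2a}$ by an explicit three-part split ($k\le K_0$, $K_0<k\le K$, $k>K$), whereas you absorb small $K$ into the trivial bound $\|\grad\theta\|_{L^2}\lesssim\|\grad\theta\|_{B^{\log,a}}$; both are fine.
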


\begin{proof}
By interpolation, it is enough to consider $\ell=2^K$ for some arbitrary $K\in\N$. Using the Littlewood--Paley characterization of Sobolev norms \eqref{24}, we may decompose the gradient norm as
\[
\|\grad \theta\|_{L^2}^2  \sim \sum_{k\le K_0} \|\grad \theta\ast\phi_k\|_{L^2}^2 +  \sum_{K_0+1\le k\le K} \|\grad \theta\ast\phi_k\|_{L^2}^2+\sum_{k\ge K+1} \|\grad \theta\ast\phi_k\|_{L^2}^2,
\]
where  $K_0 $ is such that the mapping $k\mapsto \frac{4^k}{k^{2a}}$ is increasing for $k\ge K_0$, therefore, $K_0 = \frac{2a}{\log 4}$, and understanding that the middle term vanishes if $K\le K_0$.  For the first term, we notice that, by the boundedness of the considered frequencies, it holds
\begin{align*}
\sum_{k\le K_0} \|\grad \theta\ast\phi_k\|_{L^2}^2&  \lesssim 4^{K_0} \sum_{k\ge 1} \| \theta\ast\phi_k\|_{L^2}^2 \lesssim \sum_{k\ge 1} k^{2a}\| \theta\ast\phi_k\|_{L^2}^2\lesssim  \| \theta\|_{B^{\log,a}}^2.
\end{align*}
Moreover, by the choice of $K_0$, the second term can be bounded using monotonicity,
\[
 \sum_{K_0+1\le k\le K} \|\grad \theta\ast\phi_k\|_{L^2}^2 \lesssim \frac{4^K}{K^{2a}} \sum_{K_0+1\le k\le K} k^{2a}\| \theta\ast\phi_k\|_{L^2}^2 \le \frac{4^K}{K^{2a}}\| \theta\|_{B^{\log,a}}^2,
\]
while for the third term, we use the lower bound on the considered frequencies,
\[
\sum_{k\ge K+1} \|\grad \theta\ast\phi_k\|_{L^2}^2 \lesssim \frac1{K^{2a}}\sum_{k\ge K+1} k^{2a}\|\grad \theta\ast\phi_k\|_{L^2}^2  \le  \frac1{K^{2a}} \| \grad \theta\|_{B^{\log,a}}^2.
\]

Combining these bounds yields the statement.
\end{proof}

With these preparations, we are in the position to establish the bound on the dissipation term in Theorem \ref{T4},
 \begin{equation}\label{26a}
\begin{aligned}
 \left(\kappa \int_0^t \|\grad\thetak\|_{L^2}^2\, \dt\right)^{1/2} \le \frac{C}{\log^a\left(2+\frac1{\kappa t}\right)}\left(  \left(\int_0^t \|\grad u\|_{L^p}\, \dt\right)^a \|\theta_0\|_{L^{\infty}} + \|\theta_0\|_{B^{\log, a}}\right).
 \end{aligned}
\end{equation}

\begin{proof}[Proof of Theorem \ref{T4}. Part 1: Gradient estimate \eqref{26a}]
The statement holds trivially for all sufficiently big $\kappa t$ by making the implicit constant in the inequality big enough, hence we may assume that $\kappa t$ is small, say $\leq \frac{1}{100}$.

From the interpolation estimate of Lemma \ref{L3} and the regularity estimates of Theorem \ref{T2}, we obtain that
\begin{align*}
\kappa \int_0^t \|\grad \theta\|_{L^2}^2\, \ds &\lesssim \frac{\kappa \ell^2}{(\log\ell)^{2a}}\int_0^t \|\theta\|_{B^{\log,a}}^2\, \ds + \frac{\kappa}{(\log\ell)^{2a}} \int_0^t \|\grad\theta\|_{B^{\log,a}}^2\,\ds\\
&\lesssim \frac{\kappa t\ell^2 +1}{(\log \ell)^{2a}} \left(\|\theta_0\|_{B^{\log,a}}^2 + \left(\int_0^t \|\grad u\|_{L^p}\, \ds\right)^{2a}\|\theta_0\|_{L^{\infty}}^2\right),
\end{align*}
which holds true for any $\ell\ge2$.  Picking $\ell\sim\frac1{\sqrt{t\kappa}}$ yields the desired result for $\kappa t$ small enough.
\end{proof}

It now remains to establish the estimate on the convergence rate stated in Theorem~\ref{T4},
\begin{equation}\label{26b}
\begin{aligned}
\mel \|\theta(t) - \thetak(t)\|_{L^2}  \\
& \lesssim  \log^{-a}\left(2+\frac1{\kappa t}\right) \left(\left(1+ \left(\int_0^t \|\grad u\|_{L^p}\, \dt\right)^p\right) \|\theta_0\|_{L^{\infty}} + \|\theta_0\|_{B^{\log, a}}\right).
 \end{aligned}
\end{equation}
Its derivation requires some further preparations. In a first step, we derive a sharp estimate on the rate of \emph{weak convergence} for the zero-diffusivity limit. Studying weak convergence is convenient, as a direct estimate on the $L^2$-distance would require sharp smoothing estimates to control the norm of the Laplacian in terms of the Besov norm of the initial data --- which are not even known to exists. Here, weak convergence is measured in terms of Kantorovich--Rubinstein distances with logarithmic cost,  which were introduced earlier in \cite{Seis18,Seis17}: For two nonnegative measures $\mu,\nu\in \mathcal{M}(\T^d)$ with $\mu[\T^d]=\nu[\T^d]$ and a constant $\delta>0$,  we define 
\begin{align}
\mathcal{D}_\delta(\mu,\nu)=\inf_{\pi\in \Pi(\mu,\nu)}\int_{\T^{2d}}\log\left(\frac{|x-y|}{\delta}+1\right)  \,  \text{d}\pi(x,y),
\end{align}
where $\Pi(\mu,\nu)$ is the set of all couplings of the measures $\mu$ and $\nu$. We refer to Villani's monograph \cite{Villani03} for a comprehensive introduction into the theory of optimal transportation. The cost function
\[
c_{\delta}(z) = \log\left(\frac{z}{\delta}+1\right)  
\]
that we consider here is concave and increasing and gives thus rise to a metric on $\T^d$, $d_{\delta}(x,y) = c_{\delta}(|x-y|)$.  Thanks to the Kantorovich--Rubinstein duality, see, e.g., Theorem 1.14 in \cite{Villani03}, 
\[
\mathcal{D}_{\delta}(\mu,\nu)  = \sup\left\{\intT \phi\, \dd(\mu-\nu):\: |\phi(x)-\phi(y)|\le d_{\delta}(x,y)\right\}, 
\]
the optimal transportation problem  is in fact a transshipment problem and the Kantorovich--Rubinstein distance extends to a norm on mean-zero measures $\sigma \in \mathcal{M}(\T^d)$,
\[
\D_{\delta}(\sigma) = \D_{\delta}(\sigma^+,\sigma^-),
 \]
 where the superscripted plus and minus signs indicate the positive and negative parts, respectively. It is thus suitable to study the convergence of $\thetak$ towards $\theta$ with respect to this distance. This was done previously in \cite{Seis18} in a setting in which $\theta$ has no regularity properties. Here, we  obtain a small improvement thanks the regularity established in Theorems \ref{T1} and~\ref{T2}.

\begin{proposition}\label{L6}
Under the hypotheses of Theorems \ref{T1} and \ref{T2}, there exists a constant $C$ dependent on $d$, $p$ and $a$   such that
\begin{align}
\mathcal{D}_{\delta}(\theta(t),\theta^{\kappa}(t))&\lesssim\sup_{s\le t}\norm{\theta(s)-\thetak(s)}_{L^q}\int_0^t\norm{\nabla u}_{L^p}\, \dt\\
&\quad + \frac1{\delta} \frac{\sqrt{\kappa t}}{\log^a\left( 2+\frac{1}{\kappa t}\right)}\left(\left(\int_0^t\norm{\nabla u}_{L^p}\, \dd s\right)^a\norm{\theta_0}_{L^\infty}+\norm{\theta_0}_{B^{\log,a}}\right),
\end{align}
for any $t>0$,   where $\frac{1}{p}+\frac{1}{q}=1$.
\end{proposition}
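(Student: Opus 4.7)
My plan is to combine Kantorovich--Rubinstein duality with the PDE satisfied by the difference $\rho:=\theta-\thetak$, namely $\partial_s\rho+u\cdot\nabla\rho=-\kappa\Delta\thetak$ with $\rho(0)=0$. By duality, it suffices to bound $\int_{\T^d}\phi\,\rho(t)\,\dx$ uniformly over test functions $\phi$ with $|\phi(x)-\phi(y)|\le c_\delta(|x-y|)$, which in particular implies $\norm{\nabla\phi}_{L^\infty}\le 1/\delta$. Using $\div u=0$ and integration by parts, the identity
\[\frac{\dd}{\ds}\int_{\T^d}\phi\,\rho\,\dx=\int_{\T^d}\rho\,u\cdot\nabla\phi\,\dx+\kappa\int_{\T^d}\nabla\phi\cdot\nabla\thetak\,\dx\]
integrated from $0$ to $t$ (with $\rho(0)=0$) splits the target into an advective and a diffusive contribution.

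The diffusive contribution is handled directly: $\norm{\nabla\phi}_{L^\infty}\le 1/\delta$ together with Cauchy--Schwarz in time yields
\[\kappa\int_0^t\!\int_{\T^d}\nabla\phi\cdot\nabla\thetak\,\dx\,\ds\lesssim\frac{\sqrt{\kappa t}}{\delta}\Big(\kappa\int_0^t\norm{\nabla\thetak}_{L^2}^2\,\ds\Big)^{1/2},\]
and inserting the dissipation bound~\eqref{26a} from Theorem~\ref{T2}, already proved in Part~1 of the proof of Theorem~\ref{T4}, produces the second summand of the claim.

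The advective contribution is the main obstacle, because a direct H\"older estimate against $\norm{\nabla\phi}_{L^\infty}\le 1/\delta$ would introduce an unwanted factor $1/\delta$. My plan is to adapt the Lagrangian stability argument of \cite{Seis18}: one transports $\phi$ backward along the DiPerna--Lions flow of $u$, turning the advective integral into a Crippa--De Lellis-type commutator. The logarithmic cost $c_\delta$ is tailored so that the relevant difference quotient $|u(x)-u(y)|/(|x-y|+\delta)$ is pointwise bounded by $M|\nabla u|(x)+M|\nabla u|(y)$, which absorbs the naive $1/\delta$. Combining this with H\"older in the dual exponents $p,q$ and the $L^p$-boundedness of the maximal function gives the pointwise-in-time bound $\lesssim\norm{\nabla u(s)}_{L^p}\norm{\rho(s)}_{L^q}$; pulling the supremum outside the time integral yields the first summand. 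Taking the supremum over admissible $\phi$ then completes the proof, the main technical difficulty being the careful execution of the commutator argument in the presence of the diffusive source and the signed character of $\rho$.
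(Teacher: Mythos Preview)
Your treatment of the diffusive contribution is correct and coincides with the paper's: the paper also bounds $\frac{\kappa}{\delta}\int_0^t\|\nabla\thetak\|_{L^1}\,\ds$ by $\frac{\sqrt{\kappa t}}{\delta}\bigl(\kappa\int_0^t\|\nabla\thetak\|_{L^2}^2\,\ds\bigr)^{1/2}$ via Jensen and then invokes~\eqref{26a}.

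The advective contribution is where your plan breaks down. The paper does not re-derive anything here; it simply quotes from \cite{Seis18} the \emph{instantaneous} inequality
\[
\frac{\dd}{\dd t}\mathcal{D}_\delta(\theta,\thetak)\ \lesssim\ \|\nabla u\|_{L^p}\,\|\theta-\thetak\|_{L^q}\ +\ \frac{\kappa}{\delta}\,\|\nabla\thetak\|_{L^1}
\]
and integrates in time. That inequality is obtained in \cite{Seis18} through the \emph{primal} formulation: one differentiates $\int c_\delta(|x-y|)\,\dd\pi_t$ along a competitor coupling pushed forward by the flows, which produces $c_\delta'(|x-y|)\,|u(x)-u(y)|=\frac{|u(x)-u(y)|}{|x-y|+\delta}$; the maximal-function bound then applies directly, and integration against the marginals $(\theta-\thetak)^{\pm}$ yields the factor $\|\theta-\thetak\|_{L^q}$.

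Your setup is dual and time-integrated: you fix a single admissible $\phi$ and integrate $\frac{\dd}{\dd s}\int\phi\rho$ over $[0,t]$. In that representation the advective integrand is $\rho\,u\cdot\nabla\phi$, and the admissibility of $\phi$ gives only the pointwise bound $|\nabla\phi|\le 1/\delta$; no difference quotient of $u$ appears, so there is nothing for the Crippa--De~Lellis estimate to act on. Your proposed remedy, transporting $\phi$ backward along the flow, makes the advective term vanish identically (since $\partial_s\psi+u\cdot\nabla\psi=0$), but then the diffusive term contains $\nabla(\phi\circ X_{s,t})$, whose $L^\infty$ norm is not controlled by $1/\delta$ and can grow like $\exp\bigl(\int\|\nabla u\|\bigr)$. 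Either route loses the structure. To extract the difference quotient from the dual side one must use, at each time $s$, the optimal Kantorovich potential and its relation to the optimal coupling---which is the primal argument in disguise; a single test function $\phi$ fixed over the whole interval $[0,t]$ cannot deliver the first term of the claimed bound.
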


As the first term on the right-hand side is bounded by the initial datum via \eqref{23} and \eqref{30}, the logarithmic distance is bounded uniformly in $\kappa t$ provided that 
\begin{equation}
\label{26c}
\delta \ge \delta(t):=\frac{\sqrt{\kappa t}}{\log^{a}\left(2+\frac{1}{\kappa t}\right)}.
\end{equation}
Proposition \ref{L6} thus proves  that $\thetak(t)$ converges to $\theta(t)$ weakly with rate $\delta(t)$. The case $a=0$ describes the standard rate of convergence observed in the zero-diffusivity limit without regularity assumptions, cf.~\cite{Seis18}.

\begin{proof}
Our starting point is the following estimate on the rate of change of the logarithmic distance,
\begin{align}
\frac{\text{d}}{\text{d}t}\mathcal{D}_\delta(\theta(t),\theta^\kappa(t))\lesssim\norm{\nabla u(t)}_{L^p}\norm{\theta(t)-\thetak(t)}_{L^q}+ \frac{\kappa}{\delta}\norm{\nabla \thetak(t)}_{L^1},
\end{align}
which was derived earlier in \cite{Seis18} and which holds true for any $\delta>0$. Integrating in time,  and using   the assumption that $\theta$ and $\thetak$ have both the same intial datum so that their logarithmic distance vanishes initially, we find that
 \begin{align}
\mathcal{D}_\delta(\theta(t),\theta^\kappa(t))\lesssim\sup_{s\le t}\norm{\theta(s)-\thetak(s)}_{L^q}\int_0^t\norm{\nabla u}_{L^p}\, \dt+ \frac{\kappa}{\delta}\int_0^t\norm{\nabla \thetak}_{L^1}\ds.
\end{align}
We now invoke Jensen's inequality and our gradient bound \eqref{26a} to estimate the dissipation term:
\begin{align}
\frac{\kappa}{\delta}\int_0^t\norm{\nabla \thetak}_{L^1}\ds & \lesssim \frac{\sqrt{\kappa t}}{\delta}\left(\kappa \int_0^t\norm{\nabla \thetak}_{L^2}^2\ds\right)^{\frac{1}{2}}\\
& \lesssim \frac1{\delta} \frac{\sqrt{\kappa t}}{\log^a\left( 2+\frac{1}{\kappa t}\right)}\left(\left(\int_0^t\norm{\nabla u}_{L^p}\, \dd s\right)^a\norm{\theta_0}_{L^\infty}+\norm{\theta_0}_{B^{\log,a}}\right).
\end{align}
Combining the previous two bounds gives the statement of the proposition.
\end{proof}

In order to translate the estimate on weak convergence into an estimate on strong convergence, we have to make use of an interpolation inequality.

\begin{lemma}\label{L5}
Let $\sigma$ be a mean-zero function in $ L^1(\T^d)\cap B^{\log,a}(\T^d)$ for some 
$a>0$. Then it holds for  any $\ell\ge 2$ that
 \begin{align}
\norm{\sigma}_{L^1}\lesssim   \frac{\mathcal{D}_\delta(\sigma )}{c_{\delta}(1/\ell)}  +\left(\frac{1}{\log \ell}\right)^{ a}\norm{\sigma}_{B^{\log,a}}.
\end{align}

\end{lemma}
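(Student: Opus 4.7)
The plan is to split $\sigma$ into a low-frequency part, whose $L^1$-norm can be read off via Kantorovich--Rubinstein duality from the logarithmic transportation cost, and a high-frequency tail, which is controlled by the Besov norm. Pick $K\in\N$ with $2^K\sim\ell$, so $K\sim\log\ell$. Because $\sigma$ has mean zero, it admits the decomposition $\sigma=\sigma\ast\psi_{K-1}+\sigma_K^{\geq}$ introduced in Section~\ref{S2}.

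For the high-frequency tail, embedding $L^2(\T^d)\hookrightarrow L^1(\T^d)$ gives $\|\sigma_K^{\geq}\|_{L^1}\lesssim\|\sigma_K^{\geq}\|_{L^2}$. Using Lemma~\ref{lemma 1} and the very definition of $\|\cdot\|_{\mathbf{B}^{\log,a}}$, we may bound
\[
\|\sigma_K^{\geq}\|_{L^2}^2\lesssim K^{-2a}\cdot K^{2a}\|\sigma_K^{\geq}\|_{L^2}^2 \lesssim K^{-2a}\|\sigma\|_{\mathbf{B}^{\log,a}}^2\lesssim (\log\ell)^{-2a}\|\sigma\|_{B^{\log,a}}^2,
\]
which produces the second term on the right-hand side of the claimed inequality.

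For the low-frequency part we dualize against $L^{\infty}$: since $\psi_{K-1}$ is even,
\[
\|\sigma\ast\psi_{K-1}\|_{L^1}=\sup_{\|\varphi\|_{L^{\infty}}\leq 1}\intT \varphi\,(\sigma\ast\psi_{K-1})\,\dx=\sup_{\|\varphi\|_{L^{\infty}}\leq 1}\intT (\varphi\ast\psi_{K-1})\,\sigma\,\dx.
\]
Since $\sigma$ is mean-zero, adding a constant to $\varphi\ast\psi_{K-1}$ does not change the integral, so by the Kantorovich--Rubinstein duality quoted in Section~\ref{S4} we obtain
\[
\intT (\varphi\ast\psi_{K-1})\,\sigma\,\dx\leq \Big(\sup_{x\neq y}\frac{|\varphi\ast\psi_{K-1}(x)-\varphi\ast\psi_{K-1}(y)|}{c_{\delta}(|x-y|)}\Big)\,\D_{\delta}(\sigma).
\]
Two pointwise estimates control the numerator: trivially $|\varphi\ast\psi_{K-1}(x)-\varphi\ast\psi_{K-1}(y)|\leq 2\|\varphi\|_{L^{\infty}}\leq 2$, while Young's inequality and the scaling of $\psi_{K-1}$ give the Lipschitz bound $|\varphi\ast\psi_{K-1}(x)-\varphi\ast\psi_{K-1}(y)|\leq \|\nabla\psi_{K-1}\|_{L^1}|x-y|\lesssim 2^K|x-y|$. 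Hence the ratio is bounded by
\[
\sup_{r>0}\frac{\min(2,\,C2^K r)}{\log(1+r/\delta)},
\]
and a direct computation (the minimum changes regime at $r\sim 2^{-K}$, and the function is increasing, then decreasing across this point) shows that this supremum is of order $1/\log(1+1/(2^K\delta))\sim 1/c_{\delta}(1/\ell)$. Combining the two parts gives the claimed inequality.

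The main obstacle is precisely this balancing step: one must verify that the two competing upper bounds on the increments of $\varphi\ast\psi_{K-1}$ meet at the scale $2^{-K}\sim 1/\ell$ so that the logarithmic denominator $c_{\delta}(1/\ell)$ appears with the sharp exponent. Everything else is routine: the $L^1$--$L^2$ embedding on the torus, the $\mathbf{B}^{\log,a}$-characterization of Lemma~\ref{lemma 1}, and the $L^{\infty}$--$L^1$ duality for the low-frequency part.
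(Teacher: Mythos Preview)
Your proof is correct and follows essentially the same route as the paper: the same high-/low-frequency split at scale $2^K\sim\ell$, the same $L^2\hookrightarrow L^1$ bound on the tail, and the same two estimates (uniform $L^\infty$ bound and $O(2^K)$-Lipschitz bound) on the smoothed test function. The only difference is presentational: you invoke Kantorovich--Rubinstein duality to bound the $d_\delta$-Lipschitz constant of $\varphi\ast\psi_{K-1}$ directly, whereas the paper works on the coupling side, splits the integral into $|x-y|\le r$ and $|x-y|>r$, and obtains a term $\lesssim \ell r\,\|\sigma\|_{L^1}$ that must be absorbed into the left-hand side by choosing $r=1/(C\ell)$ --- your variant is marginally cleaner for sidestepping this absorption.
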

The estimate extends previous interpolations between logarithmic Kantorovich--Ru\-bin\-stein distances and Sobolev norms \cite{BrenierOttoSeis11,OttoSeisSlepcev13} to our Besov space setting, and belongs to a family of Kantorovich--Sobolev inequalities \cite{Ledoux15}.
\begin{proof}
Setting $k:=\lfloor \log_2 \ell\rfloor+1$, we can rewrite the $L^1$-norm as \begin{align}
\norm{\sigma}_{L^1}=\sup_{\|\rho\|_{L^{\infty}}\le 1} \intT\sigma\rho\, \dx =\sup_{\|\rho\|_{L^{\infty}}\le 1} \left( \int_{\T^d}\sigma_{k+1}^\geq\rho\, \dx + \int_{\T^d} \sigma_k^\leq\rho\, \dx\right).
\end{align}
We will estimate both summands separately and keep $\rho$ with $\|\rho\|_{L^{\infty}}\le 1$ arbitrarily fixed for a moment.

We first estimate the first summand. Using Jensen's inequality, the Littlewood--Paley decomposition for $\sigma$ and  the fact that the support of any $\widehat{\sigma_k}$  intersects precisely with three frequency blocks \eqref{27}, we have that
\begin{align}
\int_{\T^d} \sigma_{k+1}^\geq \rho\, \dx\le \|\sigma_{k+1}^{\ge}\|_{L^1} \lesssim \|\sigma_{k+1}^{\ge}\|_{L^2}\le  \Big(\sum_{j\geq k}\norm{\sigma_j}_{L^2}^2\Big)^{1/2}.
\end{align}
In view of our choice of $k$, the latter is contolled as follows: 
\begin{align}
\int_{\T^d} \sigma_{k+1}^\geq\rho \, \dx\lesssim \frac{1}{k^{a}}\Big(\sum_{j\geq k} j^{2a}\norm{\sigma_j}_{L^2}^2\Big)^{1/2} 
\lesssim \left(\frac{1}{\log \ell}\right)^{a}\norm{\sigma}_{B^{\log,a}}.\label{eq27}
\end{align}

For the second summand, we choose an arbitrary coupling $\pi$ between $\sigma^+$ and $\sigma^-$ and write
\begin{align}
&\int_{\T^d} \sigma_k^\leq\rho \, \dx=\int_{\T^d} \sigma \rho_k^\leq \, \dx= \int_{\T^d}(\sigma^+-\sigma^-)\rho_k^\leq\, \dx=\int_{\T^{2d}} \rho_k^\leq(x)-\rho_k^\leq(y)\, \text{d}\pi(x,y).
\end{align}
Given some $r>0$ that will be fixed later, we consider separately diagonal contributions $|x-y|\le r$ and off-diagonal contributions $|x-y|>r$. 
For the diagonal contributions we use a Lipschitz estimate,
\begin{align}
&\int_{|x-y|\leq r}\rho_k^\leq(x)-\rho_k^\leq(y)\, \text{d}\pi(x,y)\leq \norm{\nabla \rho_k^\leq}_{L^\infty}r \int_{|x-y|\leq r}\text{d}\pi(x,y)\leq r\norm{\nabla \rho_k^\leq}_{L^\infty}\norm{\sigma}_{L^1},
\end{align}
and bound the gradient with the help of the convolution estimate and by scaling,
\[
\norm{\nabla \rho_k^\leq}_{L^\infty} = \norm{ \rho \ast\grad\psi_k}_{L^\infty} \le \norm{\grad\psi_k}_{L^1} \norm{ \rho  }_{L^\infty}  \lesssim 2^k 
\]
to the effect that
\begin{align}
&\int_{|x-y|\leq r}\rho_k^\leq(x)-\rho_k^\leq(y)\, \text{d}\pi(x,y)\lesssim  \ell r\norm{\sigma}_{L^1},\label{eq26}
\end{align}
by our choice of $k$.
For the off-diagonal term we use the monotonicity of the cost function to bound
\begin{align}
\int_{|x-y|>r}\rho_k^\leq(x)-\rho_k^\leq(y)\, \text{d}\pi(x,y)
&\lesssim  \frac{\norm{\rho}_{L^\infty}}{c_{\delta}(r)}   \int_{|x-y|>r}c_{\delta}(|x-y|)\, \text{d}\pi(x,y) \le  \frac{ \D_{\delta}(\sigma)}{c_{\delta}(r)}.\label{eq28}
\end{align}

Adding \eqref{eq28}, \eqref{eq26} and \eqref{eq27} and choosing $r=\frac1{C\ell}$ for some $C>0$, we obtain the statement.\end{proof}

It remains to combine the Proposition \ref{L6} and Lemma \ref{L5} to deduce the desired estimate on the rate of strong convergence.

\begin{proof}[Proof of Therem \ref{T4}. Part 2: Convergence rate \eqref{26b}.]
Again, we may assume without loss of generality that $\kappa t$ is small, say $\leq \frac{1}{100}$, since otherwise the statement follows from the fact that $\norm{\theta-\theta^\kappa}_{L^2}\lesssim \norm{\theta_0}_{L^\infty}$, cf.~\eqref{23} and \eqref{30}, by making the constant in the inequality big enough.

We start with two observations. On the one hand, by choosing
$\delta =\sqrt{\kappa t}$, and taking the supremum in $s\leq t$ the estimate in Proposition \ref{L6} yields
\begin{align}
\sup_{s\le t}\D_{\sqrt{\kappa s}}(\theta(s),\thetak(s)) & \lesssim  \sup_{s\le t} \|\theta(s)-\thetak(s)\|_{L^q} \int_0^t \|\grad u\|_{L^p}\, \dt + \left(\log^{-a}\frac1{\kappa t}\right) \Lambda_{\theta_0,u,a}(t),
\end{align}
where
\[
\Lambda_{\theta_0,u, a}(t) =   \left(\int_0^t \|\grad u\|_{L^p}\, \dt\right)^a \|\theta_0\|_{L^{\infty}} + \|\theta_0\|_{B^{\log,a}}.
\]
By interpolation in Lebesgue spaces and thanks to the a priori estimates in \eqref{23} and \eqref{30}, the latter implies
\begin{align}\label{28a}
\mel \sup_{s\le t}\D_{\sqrt{\kappa s}}(\theta(s),\thetak(s))\\
& \lesssim   \sup_{s\le t} \|\theta(s)-\thetak(s)\|_{L^1}^{1/q}\|\theta_0\|_{L^{\infty}}^{1-1/q} \int_0^t \|\grad u\|_{L^p}\, \dt+ \left(\log^{-a}\frac1{\kappa t}  \right)\Lambda_{\theta_0,u,a}(t),
\end{align} On the other hand, the interpolation estimate in  Lemma \ref{L5}, with $\delta=\sqrt{\kappa t}$ and $\ell = (\kappa t)^{-1/4}$ gives
\begin{align}\label{28b}
\norm{\theta(t)-\theta^\kappa(t)}_{L^1}
&\lesssim 
\left(  \log^{-1}\frac1{\kappa t}\right)\D_{\sqrt{\kappa t}}(\theta(t),\thetak(t)) + \left(\log^{-a}\frac1{\kappa t}\right) \Lambda_{\theta_0,u,a}(t).
\end{align}
We plug this bound into \eqref{28a} and use the elementary estimate $xy  = \eps^{1/q}x\eps^{-1/q}y\le \frac{\eps}{q} x^q + \frac1{\eps^{p/q}p}y^p$,
\begin{align*}
\mel\sup_{s\le t}\D_{\sqrt{\kappa s}}(\theta(s),\thetak(s))\\
& \lesssim \left(\log^{-1/q}\frac1{\kappa t}\right)\|\theta_0\|_{L^{\infty}}^{1-1/q}\int_0^t \|\grad u\|_{L^p}\, \dt\left(\sup_{s\le t}\D_{\sqrt{\kappa s}}(\theta(s),\thetak(s))\right)^{1/q}\\
&\quad + \left(\log^{-a/q}\frac1{\kappa t}\right) \|\theta_0\|_{L^{\infty}}^{1-1/q}\int_0^t \|\grad u\|_{L^p}\, \dt\left(\Lambda_{\theta_0,u,a}(t)\right)^{1/q}+ \left(\log^{-a}\frac1{\kappa t}  \right)\Lambda_{\theta_0,u,a}(t) \\
&\lesssim \eps \left(\sup_{s\le t}\D_{\sqrt{\kappa s}}(\theta(s),\thetak(s))\right) 
+ \eps^{-p/q}\left(\log^{-p/q}\frac1{\kappa t}\right)\|\theta_0\|_{L^{\infty}}\left(\int_0^t \|\grad u\|_{L^p}\, \dt\right)^{p}\\
&\quad + \left(\log^{-a/q}\frac1{\kappa t}\right) \|\theta_0\|_{L^{\infty}}^{1-1/q}\int_0^t \|\grad u\|_{L^p}\, \dt\left(\Lambda_{\theta_0,u,a}(t)\right)^{1/q}+ \left(\log^{-a}\frac1{\kappa t}  \right)\Lambda_{\theta_0,u,a}(t).
\end{align*}
The first term on the right-hand side can be absorbed on the left-hand side if $\eps$ is chosen sufficiently small. Since $a/q<\min\{a,p/q\}$ by our choices of $a$, $p$, and $q$, obtain after using Young's inequality on the norms and integrals that
\[
\sup_{s\le t}\D_{\sqrt{\kappa s}}(\theta(s),\thetak(s)) \lesssim \left(\log^{-a/q}\frac1{\kappa t}\right) \left(\|\theta_0\|_{L^{\infty}}\left(\int_0^t\|\grad u\|_{L^p}\, \dt\right)^p+\Lambda_{\theta_0,u,a}(t)\right).
\]
Inserting  this estimate for our logarithmic distance into the estimate on the convergence rate \eqref{28b}, we obtain
\begin{align}
\mel \sup_{s\le t}\norm{\theta(s)-\theta^\kappa(s)}_{L^1}
\\
&\lesssim 
\left(\log^{-1-a/q}\frac1{\kappa t}\right) \left(\|\theta_0\|_{L^{\infty}}\left(\int_0^t\|\grad u\|_{L^p}\, \dt\right)^p+\Lambda_{\theta_0,u,a}(t)\right)\\
&\quad+ \left(\log^{-a}\frac1{\kappa t}\right) \Lambda_{\theta_0,u,a}(t).
\end{align}
We note that $-a\geq -1-\frac{a}{q}$ precisely if $a\leq p$, which is always true under our assumption. Hence we obtain 
\begin{align}
\mel \sup_{s\le t}\norm{\theta(s)-\theta^\kappa(s)}_{L^1}
\\
&\lesssim 
\left(\log^{-a}\frac1{\kappa t}\right) \left(\|\theta_0\|_{L^{\infty}}\left(\int_0^t\|\grad u\|_{L^p}\, \dt\right)^p+\Lambda_{\theta_0,u,a}(t)\right).
\end{align}
This easily implies \eqref{26b} after inserting the defintion of $\Lambda_{\theta_0,u,a}$.
\end{proof}

\bibliography{euler}
\bibliographystyle{abbrv}
\end{document}